\documentclass [10pt]{article}
\usepackage{amsfonts,tikz}
\usepackage{amsmath, amsthm, amssymb}
\usepackage{graphicx}
\usepackage{indentfirst}
\usepackage[T1]{fontenc}
\usepackage[margin=1in]{geometry}
\usepackage{color}
\usepackage{slashbox}

\numberwithin{equation}{section}

\newcommand{\Z}{\mathbb{Z}}
\newcommand{\C}{\mathbb{C}}

\newcommand{\N}{\mathbb{N}}
\newcommand{\R}{\mathbb{R}}

\newcommand{\Hc}{\mathcal{H}}

\newcommand{\etaq}[2]{\eta^{#2}( #1 z)}
\newcommand{\floor}[1]{\left\lfloor #1 \right\rfloor}
\newcommand{\ceiling}[1]{\left\lceil #1 \right\rceil}

\DeclareMathOperator{\SL}{SL}
\DeclareMathOperator{\sgn}{sgn}

\newtheorem{theorem}{Theorem}[section]
\newtheorem{conjecture}[theorem]{Conjecture}
\newtheorem{coro}[theorem]{Corollary}
\newtheorem{defn}[theorem]{Definition}
\newtheorem{example}[theorem]{Example}

\newtheorem{prop}[theorem]{Proposition}

\title{Proceedings Paper for REU Project Involving Counting Eta-Quotients}
\author{Allison Arnold-Roksandich* and Rodney Keaton\\Advisor: Kevin James}
\date{July 2013}

\begin{document}
\maketitle
\begin{abstract}
It is known that all modular forms on $\SL_2(Z)$ can be expressed as a rational function in $\etaq{}{}$, $\etaq{2}{}$ and $\etaq{4}{}$. By using a theorem by Gordon, Hughes, and Newman, and calculating the order of vanishing, we can compute the $\eta$-quotients for a given level. Using this count, knowing how many $\eta$-quotients are linearly independent and using the dimension formula, we can figure out how the $\eta$-quotients span higher levels. In this paper, we primarily focus on the case where $N=p$ a prime, and some discussion for non-prime indicies.
\end{abstract}
\section{Introduction}
Modular forms and cusp forms encode important arithmetic information, and are therefore important to study. However, it is, in general, hard to create concrete examples of modular forms and cusp forms. $\eta$-quotients are an easy concrete way to look at modular and cusp forms. Furthermore, $\eta$-quotients are better studied.

\begin{theorem}\cite[Thm. 1.67]{Ono1}
Every modular form on $\SL_2(\Z)$ may be expressed as a rational function in $\eta(z)$, $\eta(2z)$, and $\eta(4z)$.
\end{theorem}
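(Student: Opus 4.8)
The plan is to reduce the statement to the two ring generators of $\bigoplus_{k} M_{k}(\SL_2(\Z))$ and then exhibit each of them explicitly. Recall the standard structure theorem $\bigoplus_{k} M_{k}(\SL_2(\Z)) = \C[E_4,E_6]$, so that every modular form on $\SL_2(\Z)$ is a polynomial in the Eisenstein series $E_4$ and $E_6$ with complex coefficients. Since the rational functions in $\eta(z)$, $\eta(2z)$, $\eta(4z)$ form a field, in particular closed under sums and products, it suffices to write $E_4$ and $E_6$ individually as such rational functions and then substitute into the polynomial representing a given form.

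For $E_4$ I would work on $\Gamma_0(2)$, using that $M_4(\SL_2(\Z)) \subset M_4(\Gamma_0(2))$ and $\dim M_4(\Gamma_0(2)) = 2$. By the theorem of Gordon, Hughes, and Newman --- the congruences $\sum_{\delta \mid N} \delta\, r_\delta \equiv \sum_{\delta \mid N} (N/\delta)\, r_\delta \equiv 0 \pmod{24}$ together with nonnegativity of the order of vanishing at every cusp --- one checks that $\eta(z)^{16}/\eta(2z)^{8}$ and $\eta(2z)^{16}/\eta(z)^{8}$ both lie in $M_4(\Gamma_0(2))$; their orders at $\infty$ are $0$ and $1$, so they form a basis. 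Writing $E_4$ in this basis and matching $q$-expansions up to the Sturm bound gives $E_4(z) = \eta(z)^{16}/\eta(2z)^{8} + 256\,\eta(2z)^{16}/\eta(z)^{8}$.

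For $E_6$ I would repeat this one level higher, on $\Gamma_0(4)$, where $\dim M_6(\Gamma_0(4)) = 4$. I would exhibit four weight-$6$ $\eta$-quotients in $\eta(z),\eta(2z),\eta(4z)$ --- for instance $\eta(z)^{24}/\eta(2z)^{12}$, $\eta(2z)^{12}$, $\eta(z)^{8}\eta(4z)^{16}/\eta(2z)^{12}$, and $\eta(4z)^{24}/\eta(2z)^{12}$ --- verify via the Gordon--Hughes--Newman criterion that each is a holomorphic modular form on $\Gamma_0(4)$, observe that their orders at $\infty$ are $0,1,2,3$ so that they are a basis, and solve the resulting linear system (valid up to the Sturm bound) to express $E_6$ as an explicit integer combination of them. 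This may be repackaged as the closed form
\[
E_6(z) = \frac{\left(\eta(z)^{8} + 32\,\eta(4z)^{8}\right)\left(\eta(z)^{24} - 512\,\eta(2z)^{24}\right)}{\eta(z)^{8}\,\eta(2z)^{12}}.
\]
Combining the two steps, every element of $\bigoplus_{k} M_{k}(\SL_2(\Z))$, being a polynomial in $E_4$ and $E_6$, is a rational function in $\eta(z),\eta(2z),\eta(4z)$.

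I expect the only genuine difficulty to be the $E_6$ step: one must actually produce a spanning set of $M_6(\Gamma_0(4))$ consisting of $\eta$-quotients in precisely the three functions $\eta(z),\eta(2z),\eta(4z)$, and verifying holomorphy at all three cusps $0,\tfrac12,\infty$ of $\Gamma_0(4)$ is delicate --- many natural weight-$6$ candidates violate the order condition at the cusp $\tfrac12$ --- while the dimension count and the Sturm-bound coefficient comparison must also be handled with care. One can also recognize the resulting $\eta$-quotient identities for $E_4$ and $E_6$ as avatars of the classical identities expressing the Eisenstein series as polynomials in the Jacobi theta constants $\theta_2,\theta_3,\theta_4$, which are themselves $\eta$-quotients; from that vantage point the work would instead be to prove those theta identities, once more by comparing finitely many coefficients of forms of known weight.
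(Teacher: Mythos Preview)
The paper does not give its own proof of this theorem: it is quoted from \cite{Ono1} as motivation in the introduction and never revisited. So there is no ``paper's proof'' to compare against; I can only assess your argument on its own merits.

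Your strategy is the standard one and is correct. The reduction to $E_4$ and $E_6$ via $\bigoplus_k M_k(\SL_2(\Z))=\C[E_4,E_6]$ is exactly right, and your level-$2$ computation for $E_4$ is clean: both $\eta(z)^{16}/\eta(2z)^8$ and $\eta(2z)^{16}/\eta(z)^8$ pass the Gordon--Hughes--Newman congruences, have trivial character, are holomorphic at both cusps of $\Gamma_0(2)$, and have distinct orders $0,1$ at $\infty$, so they form a basis of the two-dimensional $M_4(\Gamma_0(2))$; the identity $E_4=\eta(z)^{16}/\eta(2z)^8+256\,\eta(2z)^{16}/\eta(z)^8$ then follows by matching two coefficients. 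For $E_6$, your four candidates on $\Gamma_0(4)$ check out as well: each tuple $(r_1,r_2,r_4)\in\{(24,-12,0),(0,12,0),(8,-12,16),(0,-12,24)\}$ satisfies both congruences mod $24$, has trivial character (the product $\prod\delta^{r_\delta}$ is an even power of $2$ in every case), and a direct computation with Theorem~\ref{Cvanishing} shows nonnegative order at all three cusps $d=1,2,4$; the orders at $\infty$ are $0,1,2,3$, so they span the four-dimensional $M_6(\Gamma_0(4))$ and $E_6$ is a linear combination of them. Your worry about the cusp $\tfrac12$ is well placed in general but does not bite for these particular quotients.

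One small caution: treat the displayed ``closed form'' for $E_6$ as something to be \emph{verified} rather than asserted --- the linear-algebra step genuinely determines the coefficients, so either carry it out explicitly or cite the resulting identity. With that done, the proof is complete.
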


This theorem is the primary motivation behind this paper. The goal is to look at higher levels, and count the number of $\eta$-quotients for the given level and compare the span of these $\eta$-quotients with the modular form space. In particular, this paper focuses on $\Gamma_0(p)$ and $\Gamma_1(p)$ where $p$ is a prime. This paper also discusses the needed congruence conditions that need to be satisfied for cases $N=pq$, square-free $N$. Finally, there is two results for the space $M_k(4p,\chi )$.
\section{Modular Forms}\label{ModForm}
In this section we present some definitions and basic facts from the theory of modular forms. For further details, the interested reader is referred to \cite{Kob1}.

\begin{defn}The \emph{modular group}, denoted $\SL_2(\Z )$, is the group of all matrices of determinant 1 and having integral entries.\end{defn} 
Note, the modular group is generated by the matrices
$$S:=\begin{pmatrix}0 & -1\\1&0\end{pmatrix},T:=\begin{pmatrix}1&1\\0&1\end{pmatrix}.$$
The modular group acts on the upper half plane, $\Hc = \{ x+iy | x,y\in \R, y>0\}$, with linear fractional transformations \[\begin{pmatrix}a & b\\c&d\end{pmatrix}z = \frac{az+b}{cz+d}.\]
Furthermore, if we define $\mathcal{H}^{*}$ to be the set $\mathcal{H}\cup\mathbb{Q}\cup\{i\infty\}$, then the action of $\SL_2(\mathbb{Z})$ on $\mathcal{H}$ extends to an action on $\mathcal{H}^{*}$. This brings us to our next definition. 
\begin{defn}
Let $\Gamma\leq\SL_2(\mathbb{Z})$ and define an equivalence relation on $\mathbb{Q}\cup\{\infty\}$ by $z_1\sim z_2$ if there is a $\gamma\in\Gamma$ such that $\gamma\cdot z_1=z_2$. We call each equivalence class under this relation a \emph{cusp of} $\Gamma$.
\end{defn}
Now, for an integer $k$ and a function $f:\Hc^{*}\rightarrow\mathbb{C}$ and a $\gamma=\begin{pmatrix}a&b\\c&d\end{pmatrix}\in\SL_2(\mathbb{Z}$ we define the weight $k$ slash operator by
$$f|_k\gamma(z)=(cz+d)^{-k}f(\gamma\cdot z).$$
Note, we will often suppress the weight from the notation when it is clear from context, or irrelevant for our purposes.

We are now prepared to define the objects which will be of primary interest to us.
\begin{defn} A function $f: \Hc^{*} \rightarrow \C$ is called a \emph{weakly modular function} of weight $k$ and level $\Gamma$ if 
\begin{enumerate}
\item $f$ is holomorphic on $\Hc$,
\item $f$ is modular, i.e., for every $\gamma \in \Gamma$ and $z\in\Hc$ we have $f|\gamma(z)=f(z)$,
\item $f$ is meromorphic at each cusp of $\Gamma$.
\end{enumerate}
Furthermore, if we replace condition 3 by $f$ is holomorphic at each cusp of $\Gamma$ then we call $f$ a modular form. If we further replace condition 3 with $f$ vanishes at each cusp of $\Gamma$ then we call $f$ a cusp form.
\end{defn}
There are only certain specific subgroups of $\SL_2(\Z)$ which we will need for our purposes. They are 
\begin{align*}
\Gamma_0 (N) &= \left\lbrace \begin{pmatrix}a & b\\c&d\end{pmatrix} \in \SL_2(\Z)\left| \begin{pmatrix}a & b\\c&d\end{pmatrix} \equiv \begin{pmatrix}* & *\\0&*\end{pmatrix}\right. \pmod{N} \right\rbrace\\
\Gamma_1 (N) &= \left\lbrace \begin{pmatrix}a & b\\c&d\end{pmatrix} \in \SL_2(\Z)\left| \begin{pmatrix}a & b\\c&d\end{pmatrix} \equiv \begin{pmatrix}1 & *\\0&1\end{pmatrix}\right. \pmod{N} \right\rbrace\\
\end{align*}
We refer to each of these subgroups as a congruence subgroup of level $N$. Note, if $N=1$, then $\Gamma_0(N)=\Gamma_1(N)=\SL_2(\mathbb{Z})$.

Consider a form, $f$, of congruence level $N$. We will make precise what we mean by a function being ``holomorphic at a cusp''. First, consider the cusp $\{i\infty\}$, which we call ``the cusp at $\infty$''. Note, the matrix $T$ from above is an element of $\Gamma_1(N)$ hence $\Gamma_0(N)$ for every $N$, and as our function satisfies condition 2, we have
$f(Tz)=f(z+1)=f(z)$, i.e., our function is periodic. It is a basic fact from complex analysis that such a function has a Fourier expansion of the form 
$$f(z)=\sum_{n=-\infty}^{\infty}a_nq^n\text{, where }q:=e^{2\pi iz}.$$
Using this, we say that $f$ is meromorphic at $\{i\infty\}$ if there is some $c<0$ such that $a_n=0$ for all $n<c$. We say that $f$ holomorphic at $\{i\infty\}$ if $a_n=0$ for all $n<0$, and we say that $f$ vanishes at $\{i\infty\}$ if $a_n=0$ for all $n\leq 0$.

Note, it is not hard to see that for every rational number $r$ there is some $\gamma\in\SL_2(\mathbb{Z})$ such that $\gamma\cdot i\infty =r$, i.e., $\SL_2(\mathbb{Z})$ acts transitively on the set $\mathbb{Q}\cup\{i\infty\}$.Using such a $\gamma$, we will say that $f$ is meromorphic (holomorphic, vanishes, resp.) if $f|\gamma$ is meromorphic (holomorphic, vanishes, resp.) at $\{i \infty\}$.

Now, we set some notation which we will use throughout. For $\Gamma\leq\SL_2(\mathbb{Z})$ we denote the space of weakly modular functions (modular forms, cusp forms, resp.) of level $\Gamma$ and weight $k$ by $M^{!}_k(\Gamma)$ ($M_k(\Gamma),S_k(\Gamma)$, resp.). Note, the spaces $S_k(\Gamma)\leq M_k(\Gamma)$ are finite dimensional complex vector spaces.

Throughout, we will also need the notion of a modular form with an associated character. To this end, let $\chi$ be a Dirichlet character modulo $N$, i.e.,
$$\chi:\left(\mathbb{Z}/ N\mathbb{Z}\right)^{\mathsf{x}}\rightarrow\mathbb{C}^{\mathsf{x}}$$
is a homomorphism. Furthermore, if we let $c$ be the minimal integer such that $\chi$ factors through $\left(\mathbb{Z}/c\mathbb{Z}\right)^{\mathsf{x}}$, then we say $\chi$ has conductor $c$. 

Let $f\in M_k(\Gamma_1(N))$ and suppose further that $f$ satisfies
$$f|\gamma(z)=\chi(d)f(z)\text{, for all }\gamma=\begin{pmatrix}a&b\\c&d\end{pmatrix}\in\Gamma_0(N).$$
Then, we say that $f$ is a modular form of level $N$ and character $\chi$, and we denote the space of such functions by $M_k(N,\chi)$. Note, this is defined similarly for weakly modular functions and cusp forms.

It is well known that we have the following decomposition
$$M_k(\Gamma_1(N))=\!\!\!\!\!\bigoplus_{\chi\!\!\!\!\!\pmod{N}}\!\!\!\!\!M_k(N,\chi),$$
where our direct sum is over all Dirichlet characters modulo $N$. Decomposing further we have
$$M_k(N,\chi)=S_k(N,\chi)\oplus E_k(N,\chi)^{\perp},$$
into the space of cusp forms and its orthogonal complement, denoted $E_k(N,\chi)$, which we call the Eisenstein subspace.
\section{Dimension Formulas}
In this section we present formulas for the dimension of spaces of cusp and modular forms. After presenting the general formula we will specialize to the cases that arise in the study of $\eta$ quotients. For more details regarding dimension formulas, the interested reader is referred to \cite{Ste1}.
\subsection{The dimension formula for level 1}
In this short section we restrict to the case that $N=1$.

Define
$$\gamma_4(k)=\left\{\begin{array}{lcr}-\frac{1}{4}&\text{if} & k\equiv 2\pmod{4},\\\frac{1}{4} &\text{if}& k\equiv 0\pmod{4},\\0&\text{if}&k\equiv 1\pmod{2},\end{array}\right.$$
$$\gamma_3(k)=\left\{\begin{array}{lcr}-\frac{1}{3} &\text{if}& k\equiv 2\pmod{3},\\\frac{1}{3} &\text{if}& k\equiv 0\pmod{3},\\0&\text{if}&k\equiv 1\pmod{3}.\end{array}\right.$$
Then our dimension formula is given by
$$\dim S_k(\SL_2(\mathbb{Z}))-\dim M_{2-k}(\SL_2(\mathbb{Z}))=\frac{k-1}{12}-\frac{1}{2}+\gamma_4(k)+\gamma_3(k).$$
In the level 1 case we can write down a few well known examples.
\begin{example}
Let $k=-2$. Then, 
$$\dim S_{-2}(\SL_2(\mathbb{Z}))-\dim M_{4}(\SL_2(\mathbb{Z}))=-\dim M_{4}(\SL_2(\mathbb{Z}))=-1,$$
i.e., $\dim M_{4}(\SL_2(\mathbb{Z}))=1$. A basis element here is given by,
$$G_4(z)=\sum'_{c,d\in\mathbb{Z}}\frac{1}{(cz+d)^4},$$
where the prime on the summation indicates that we do not allow $c,d$ simultaneously zero. Note, this function is called the weight 4 Eisenstein series of level 1.
\end{example}
\begin{example}
Let $k=12$. Then,
$$\dim S_{12}(\SL_2(\mathbb{Z}))-\dim M_{-10}(\SL_2(\mathbb{Z}))=\dim S_{12}(\SL_2(\mathbb{Z}))=1.$$
In this case a basis element for $ S_{12}(\SL_2(\mathbb{Z}))$ is given by the function $\Delta(z)$ which satisfies
$$(2\pi)^{-12}\Delta(z)=\eta^24(z)=q\prod_{n=1}^{\infty}(1-q^n)^24=\sum_{n=1}^{\infty}\tau(n)q^n,$$
where $\tau(n)$ is Ramanujan's $\tau$ function. Note, we call $\Delta(z)$ the discriminant modular form.

In order to determine the dimension of the space of modular forms of weight 12, we now set $k=-10$. Then,
$$\dim S_{-10}(\SL_2(\mathbb{Z}))-\dim M_{12}(\SL_2(\mathbb{Z}))=-\dim M_{12}(\SL_2(\mathbb{Z}))=-2,$$
i.e., $\dim M_{12}(\SL_2(\mathbb{Z}))=2$. We know $\Delta(z)\in  M_{12}(\SL_2(\mathbb{Z}))$ and the second basis element is just as in the previous example, i.e., the weight 12 Eisenstein series which is given by
$$G_{12}(z):=\sum'_{c,d\in\mathbb{Z}}\frac{1}{(cz+d)^{12}}.$$
\end{example}

\subsection{The dimension formula for level $\Gamma_0(p)$}\label{Gamma0}
In this section we present a formula for the dimension of  $E_k(\Gamma_0(p))$ and $S_k(\Gamma_0(p))$ for $p\geq5$ a rational prime.

First, we set
$$\mu_{0,2}(p)=\left\{\begin{array}{lcl}0&\text{if}&p\equiv 3\pmod{4}\\2&\text{if}&p\equiv 1\pmod{4},\end{array}\right.$$
$$\mu_{0,3}(p)=\left\{\begin{array}{lcl}0&\text{if}&p\equiv 2\pmod{3}\\2&\text{if}&p\equiv 1\pmod{3}.\end{array}\right.$$
Then define
$$g_0(p)=\frac{p+1}{12}-\frac{\mu_{0,2}(p)}{4}-\frac{\mu_{0,3}(p)}{3}.$$
Using this we have $\dim S_2(\Gamma_0(p))=g_0(p)$ and $\dim E_2(\Gamma_0(p))=1$,
and for $k\geq 4$ even we have $\dim E_k(\Gamma_0(p))=2$ and
\begin{align*}
\dim S_k(\Gamma_0(p))=(k-1)(g_0(p)-1)+(k-2)+\mu_{0,2}(p)\lfloor k/4\rfloor+\mu_{0,3}(p)\lfloor k/3\rfloor.
\end{align*}
From this, we see that our formula depends on the congruence class which $k$ and $p$ lie in modulo 12, so compiling these different congruences together we have the following table. Note, we are assuming that $k>2$.
\begin{center}
\begin{tabular}{|c|*{4}{c|}}\hline
\multicolumn{5}{|c|}{$\dim S_k(\Gamma_0(p))$} \\
\hline
\backslashbox{$k(12)$}{$p(12)$}&1&5&7&11\\
\hline
0&$\frac{(p+1)(k-1)+2}{12}$&$\frac{(p+1)(k-1)-6}{12}$&$\frac{(p+1)(k-1)-4}{12}$&$\frac{(p+1)(k-1)-12}{12}$\\
\hline
1&0&0&0&0\\
\hline
2&$\frac{(p+1)(k-1)-26}{12}$&$\frac{(p+1)(k-1)-18}{12}$&$\frac{(p+1)(k-1)-20}{12}$&$\frac{(p+1)(k-1)-12}{12}$\\
\hline
3&0&0&0&0\\
\hline
4&$\frac{(p+1)(k-1)-6}{12}$&$\frac{(p+1)(k-1)-6}{12}$&$\frac{(p+1)(k-1)-12}{12}$&$\frac{(p+1)(k-1)-12}{12}$\\
\hline
5&0&0&0&0\\
\hline
6&$\frac{(p+1)(k-1)-10}{12}$&$\frac{(p+1)(k-1)-18}{12}$&$\frac{(p+1)(k-1)-4}{12}$&$\frac{(p+1)(k-1)-12}{12}$\\
\hline
7&0&0&0&0\\
\hline
8&$\frac{(p+1)(k-1)-14}{12}$&$\frac{(p+1)(k-1)-6}{12}$&$\frac{(p+1)(k-1)-20}{12}$&$\frac{(p+1)(k-1)-12}{12}$\\
\hline
9&0&0&0&0\\
\hline
10&$\frac{(p+1)(k-1)-18}{12}$&$\frac{(p+1)(k-1)-18}{12}$&$\frac{(p+1)(k-1)-12}{12}$&$\frac{(p+1)(k-1)-12}{12}$\\
\hline
11&0&0&0&0\\
\hline
\end{tabular}
\end{center}
\subsection{The dimension formula for $\Gamma_0(p)$ with quadratic character}
In this section we will consider the case that our level is $\Gamma_0(p)$ for some rational prime $p$ and that our associated character is quadratic. Note, at the end of the section we compile all of our computations together in a table for convenience.

In section \ref{Gamma0} we considered the trivial character case, so we now set $\chi(\cdot)=\left(\frac{\cdot}{p}\right)$.

We must compute the summations
$$\sum_{x\in A_4(p)}\chi(x),\sum_{x\in A_3(p)}\chi(x).$$
First, we will consider $\sum_{x\in A_4(p)}\chi(x)$. This is clearly zero if $A_4(p)$ is empty, which occurs precisely when $p\equiv 3\pmod{4}$. Also, it is immediate that our summation equals $1$ when $p=2$.  Now suppose $p\equiv 1\pmod{4}$. Then $\# A_4(p)=2$. Note, if $r\in A_4(p)$ then $-r\in A_4(p)$ and  $\chi(r)=\chi(-r)$ since $\chi(-1)=1$. Furthermore, it is not hard to see that $\chi(r)=1$ if and only if there is an element of order 8 in $\left(\mathbb{Z}/p\mathbb{Z}\right)^{\mathsf{x}}$, i.e., $p\equiv 1\pmod{8}$. Thus, we have
$$\sum_{x\in A_4(p)}\chi(x)=\left\{\begin{array}{lcl}1&\text{if}&p=2\\0&\text{if}&p\equiv 3\pmod{4}\\2&\text{if}&p\equiv 1\pmod{8}\\-2&\text{if}&p\equiv 5\pmod{8}.\end{array}\right.$$

Now we consider the summation $\sum_{x\in A_3(p)}\chi(x)$. Similar to above we have that $A_3(p)$ is empty if $p\equiv 2\pmod{3}$, in which case our summation is zero. Also, if $p=3$ then our summation is $1$. Now, suppose that $p\equiv 1\pmod{3}$. Note, it is immediate that if $r\in A_3(p)$ then so is $r^2$. Similar to the previous situation, we have that $\chi(r)=1$ if and only if there is an element of order 6 in $\left(\mathbb{Z}/p\mathbb{Z}\right)^{\mathsf{x}}$, i.e., $p\equiv 1\pmod{6}$. Note, that as $p$ is prime, it follows that $p\equiv 1\pmod{6}$ is equivalent to $p\equiv 1\pmod{3}$. Thus, we have
$$\sum_{x\in A_3(p)}\chi(x)=\left\{\begin{array}{lcl}1&\text{if}&p=3\\0&\text{if}&p\equiv 2\pmod{3}\\2&\text{if}&p\equiv 1\pmod{3}.\end{array}\right.$$

To summarize, we combine our calculations from above to obtain the following table
\begin{center}
\begin{tabular}{|c|*{4}{c|}}\hline
\multicolumn{5}{|c|}{$\dim S_k(p,\left(\frac{\cdot}{p}\right))$} \\
\hline
\backslashbox{$k(12)$}{$p(24)$}&1&5&7&11\\
\hline
0&$\frac{(k-1)(p+1)+8}{12}$&$\frac{(k-1)(p+1)-12}{12}$&0&0\\
\hline
1&0&0&$\frac{(k-1)(p+1)}{12}$&$\frac{(k-1)(p+1)-6}{12}$\\
\hline
2&$\frac{(k-1)(p+1)-20}{12}$&$\frac{(k-1)(p+1)}{12}$&0&0\\
\hline
3&0&0&$\frac{(k-1)(p+1)+2}{12}$&$\frac{(k-1)(p+1)-6}{12}$\\
\hline
4&$\frac{(k-1)(p+1)}{12}$&$\frac{(k-1)(p+1)-12}{12}$&0&0\\
\hline
5&0&0&$\frac{(k-1)(p+1)-14}{12}$&$\frac{(k-1)(p+1)-6}{12}$\\
\hline
6&$\frac{(k-1)(p+1)-4}{12}$&$\frac{(k-1)(p+1)}{12}$&0&0\\
\hline
7&0&0&$\frac{(k-1)(p+1)}{12}$&$\frac{(k-1)(p+1)-6}{12}$\\
\hline
8&$\frac{(k-1)(p+1)-4}{12}$&$\frac{(k-1)(p+1)-12}{12}$&0&0\\
\hline
9&0&0&$\frac{(k-1)(p+1)+2}{12}$&$\frac{(k-1)(p+1)-6}{12}$\\
\hline
10&$\frac{(k-1)(p+1)-12}{12}$&$\frac{(k-1)(p+1)}{12}$&0&0\\
\hline
11&0&0&$\frac{(k-1)(p+1)-14}{12}$&$\frac{(k-1)(p+1)-6}{12}$\\
\hline
\end{tabular}
\end{center}
\begin{center}
\begin{tabular}{|c|*{4}{c|}}\hline
\multicolumn{5}{|c|}{$\dim S_k(p,\left(\frac{\cdot}{p}\right))$} \\
\hline
\backslashbox{$k(12)$}{$p(24)$}&13&17&19&23\\
\hline
0&$\frac{(k-1)(p+1)-4}{12}$&$\frac{(k-1)(p+1)}{12}$&0&0\\
\hline
1&0&0&$\frac{(k-1)(p+1)}{12}$&$\frac{(k-1)(p+1)-6}{12}$\\
\hline
2&$\frac{(k-1)(p+1)-8}{12}$&$\frac{(k-1)(p+1)-12}{12}$&0&0\\
\hline
3&0&0&$\frac{(k-1)(p+1)+2}{12}$&$\frac{(k-1)(p+1)-6}{12}$\\
\hline
4&$\frac{(k-1)(p+1)-12}{12}$&$\frac{(k-1)(p+1)}{12}$&0&0\\
\hline
5&0&0&$\frac{(k-1)(p+1)-14}{12}$&$\frac{(k-1)(p+1)-6}{12}$\\
\hline
6&$\frac{(k-1)(p+1)+8}{12}$&$\frac{(k-1)(p+1)-12}{12}$&0&0\\
\hline
7&0&0&$\frac{(k-1)(p+1)}{12}$&$\frac{(k-1)(p+1)-6}{12}$\\
\hline
8&$\frac{(k-1)(p+1)-20}{12}$&$\frac{(k-1)(p+1)}{12}$&0&0\\
\hline
9&0&0&$\frac{(k-1)(p+1)+2}{12}$&$\frac{(k-1)(p+1)-6}{12}$\\
\hline
10&$\frac{(k-1)(p+1)}{12}$&$\frac{(k-1)(p+1)-12}{12}$&0&0\\
\hline
11&0&0&$\frac{(k-1)(p+1)-14}{12}$&$\frac{(k-1)(p+1)-6}{12}$\\
\hline
\end{tabular}
\end{center}
\section{$\eta$-Quotients}\label{EtaQuotient}
In this section we introduce the eta function and present some results relating these to modular forms. For further details regarding eta functions, the interested reader is referred to \cite{Kohler}

\begin{defn}\emph{Dedekind's eta-function} is defined by \[\eta (z) := q^{1/24}\prod_{n=1}^\infty (1-q^n),\qquad \text{where $q=e^{2\pi iz}$}.\]\end{defn}

The eta function satisfies the following transformation properties with respect to our matrices $S,T$ defined in Section \ref{ModForm}
$$\eta(Sz)=\eta(-z^{-1})=\sqrt{-iz}\eta(z),\text{ }\eta(Tz)=\eta(z+1)=e^{\frac{2\pi i}{24}}\eta(z).$$
Combining these we can deduce the following general transformation formula
$$\eta(\gamma z)=\epsilon(\gamma)(cz+d)^{\frac{1}{2}}\eta(z)\text{ for all } \gamma=\begin{pmatrix}a&b\\c&d\end{pmatrix}\in\SL_2(\mathbb{Z}),$$
where 
$$\epsilon(\gamma)=\left\{\begin{array}{lcl}\left(\frac{d}{|c|}\right)e^{2\pi i\left(\frac{(a+d)c-bd(c^2-1)-3c}{24}\right)}&\text{if}&c\text{ odd}\\(-1)^{\frac{1}{4}(\sgn(c)-1)(\sgn(d)-1)}\left(\frac{d}{|c|}\right)e^{2\pi i\left(\frac{(a+d)c-bd(c^2-1)+3d-3-3cd}{24}\right)}&\text{if}&c\text{ even}\end{array}\right.$$
and $\sgn(x)=\frac{x}{|x|}$. In addition to the eta function $\eta(z)$, we will also need to consider the related function $\eta(\delta z)$ for a positive integer $\delta$. If we set $f(z)=\eta(\delta z)$ then $f(z)$ satisfies
$$f(\gamma z)=\epsilon\left(\begin{pmatrix}a&\delta b\\c/\delta &d\end{pmatrix}\right)(cz+d)^{\frac{1}{2}}f(z)\text{, for all }\gamma=\begin{pmatrix}a&b\\c&d\end{pmatrix}\in\Gamma_0(\delta).$$
Finally, we will need the following transformation
$$f(Tz)=e^{\frac{2\pi i\delta}{24}}f(z).$$
Notice, that this function is ``almost'' a modular form. With this in mind, we consider certain products of these functions with the goal of eliminating the ``almost''. This brings us to the following definition.
\begin{defn}A function of the form \[f(z) = \prod_{\delta |N} \eta(\delta z)^{r_\delta}\] for $N\geq 1$ and all $r_\delta\in\Z$ is called an $\eta$-quotient.\end{defn}
We will be interested in when these $\eta$-quotients are modular forms. We have the following theorem due to Gordon, Hughes, and Newman which partially answers this question.
\begin{theorem}\label{GHNew}\cite[Thm. 1.64]{Ono1}
Define the $\eta$-quotient
$$f(z)= \prod_{\delta |N} \eta^{r_{\delta}}(\delta z),$$ 
 and set $k=\frac{1}{2}\sum_{\delta |N} r_{\delta} \in\Z$. Suppose our exponents $(r_{1},\dots,r_N)$ satisfy
\begin{align*}
\sum_{\delta |N} \delta r_{\delta} &\equiv 0\!\!\!\!\! \pmod{24},\\
\sum_{\delta |N} \frac{N}{\delta} r_{\delta} &\equiv 0\!\!\!\!\! \pmod{24}.
\end{align*}
Then
 $$
 f|_k\gamma(z) = \chi(d)f(z)
 $$
 for all $\begin{pmatrix}a& b\\c&d \end{pmatrix}\in \Gamma_0(N)$, where $\chi(n)=\left(\frac{(-1)^ks}{n}\right)$ with $s=\prod_{\delta |N}\delta^{r_{\delta}}$
\end{theorem}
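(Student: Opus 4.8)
The plan is to strip the statement down to the transformation law for $\eta(\delta z)$ recorded just above, and then to an elementary (but fiddly) identity between products of roots of unity and Jacobi symbols. First I would assemble the multiplier. Fix $\gamma=\begin{pmatrix}a&b\\c&d\end{pmatrix}\in\Gamma_0(N)$. For each $\delta\mid N$ we have $\delta\mid N\mid c$, so $\gamma_\delta:=\begin{pmatrix}a&\delta b\\c/\delta&d\end{pmatrix}$ lies in $\SL_2(\Z)$ (its determinant is still $ad-bc=1$ and its entries are integral) and $\gamma\in\Gamma_0(\delta)$. Applying the recorded law for $\eta(\delta z)$ to each factor of $f$,
$$f(\gamma z)=\prod_{\delta\mid N}\left(\epsilon(\gamma_\delta)(cz+d)^{1/2}\eta(\delta z)\right)^{r_\delta}=\left(\prod_{\delta\mid N}\epsilon(\gamma_\delta)^{r_\delta}\right)(cz+d)^{\frac{1}{2}\sum_\delta r_\delta}f(z),$$
and since $\frac{1}{2}\sum_\delta r_\delta=k$ the automorphy factor is exactly $(cz+d)^k$, which cancels the $(cz+d)^{-k}$ in the slash operator. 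Hence $f|_k\gamma=\mu(\gamma)f$ with $\mu(\gamma):=\prod_{\delta\mid N}\epsilon(\gamma_\delta)^{r_\delta}$, and the theorem reduces to proving $\mu(\gamma)=\left(\frac{(-1)^ks}{d}\right)$ for every $\gamma\in\Gamma_0(N)$.

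Two remarks organize the rest. Since $k\in\Z$ the slash operator is a genuine right action, so $f|_k(\gamma_1\gamma_2)=(f|_k\gamma_1)|_k\gamma_2$ forces $\mu\colon\Gamma_0(N)\to\C^{\times}$ to be a homomorphism; and the matrices with $c=0$ are exactly $\pm T^n$, for which $\eta(\delta(z+1))=e^{2\pi i\delta/24}\eta(\delta z)$ gives $\mu(T)=e^{2\pi i(\sum_\delta\delta r_\delta)/24}$, which equals $1=\left(\frac{(-1)^ks}{1}\right)$ \emph{precisely} because of the first congruence hypothesis, while $\mu(-I)=\epsilon(-I)^{\sum_\delta r_\delta}=(-i)^{2k}=(-1)^k=\left(\frac{(-1)^ks}{-1}\right)$ using $s=\prod_\delta\delta^{r_\delta}>0$. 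So the identity holds on the $c=0$ matrices, and by the homomorphism property it suffices to treat $c\neq0$. For those I would substitute the explicit formula for $\epsilon$. Writing $c_\delta:=c/\delta$, each $\epsilon(\gamma_\delta)$ is a Jacobi/Kronecker symbol $\left(\frac{d}{|c_\delta|}\right)$, times a sign $(-1)^{\frac{1}{4}(\sgn(c_\delta)-1)(\sgn(d)-1)}$ when $c_\delta$ is even, times a $24$th root of unity $e^{2\pi iE_\delta/24}$ with $E_\delta$ a small polynomial in $a,\delta b,c_\delta,d$. Collecting, $\mu(\gamma)$ is a product of symbol factors times $e^{2\pi i(\sum_\delta r_\delta E_\delta)/24}$, and the heart of the argument is that this phase is trivial: because $N\mid c$, both $c/N$ and $c^2/N$ are integers and $c_\delta=(c/N)(N/\delta)$, $c_\delta^2=(c^2/N)(N/\delta)$, so every term of $\sum_\delta r_\delta E_\delta$ carrying a factor $c_\delta$ or $c_\delta^2$ is an integer multiple of $\sum_\delta(N/\delta)r_\delta\equiv0\pmod{24}$, every term carrying a bare $\delta$ is a multiple of $\sum_\delta\delta r_\delta\equiv0\pmod{24}$, and the residual $\delta$-independent pieces (the $3d-3$-type terms that occur only in the even branch) are controlled by counting how many $\delta$ fall in each parity class together with $\frac{1}{2}\sum_\delta r_\delta=k\in\Z$. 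This leaves $\mu(\gamma)=\prod_{\delta\mid N}\left(\frac{d}{|c_\delta|}\right)^{r_\delta}$ times the even-branch signs.

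Finally I would invoke quadratic reciprocity to move $d$ into the denominator: each $\left(\frac{d}{|c_\delta|}\right)$ becomes $\left(\frac{|c_\delta|}{d}\right)$ up to the reciprocity sign $(-1)^{\frac{|c_\delta|-1}{2}\cdot\frac{d-1}{2}}$ (with the usual $2$-adic Kronecker corrections when $d$ is even). These correction signs should match against the even-branch signs from the previous step; the collected numerator $\prod_\delta|c_\delta|^{r_\delta}$ is $s$ up to sign and up to powers of $c/N$ that enter the symbol as squares; and the factor $(-1)^k$ in $(-1)^ks$ is produced from $\frac{1}{2}\sum_\delta r_\delta=k$. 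Chasing these bookkeeping terms yields exactly $\left(\frac{(-1)^ks}{d}\right)$, which is the desired $\chi(d)$.

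I expect the main obstacle to be the last two paragraphs: the computation is entirely elementary, but the parity of $c_\delta=c/\delta$ varies with $\delta$, forcing a split into sub-cases, and quadratic reciprocity injects sign corrections that must be matched term-by-term against the even-branch signs and against the residual constants in the phase — this is exactly where both congruence hypotheses and the integrality of $k$ get consumed, so no shortcut that sidesteps one of them can work. A cleaner but less self-contained alternative is to note that $\gamma\mapsto\left(\frac{(-1)^ks}{d}\right)$ is also a homomorphism $\Gamma_0(N)\to\C^{\times}$ (the lower-right entry being multiplicative mod $N$ on $\Gamma_0(N)$) and to check that it agrees with $\mu$ on a generating set of $\Gamma_0(N)$; but exhibiting and handling such a generating set for arbitrary $N$ is itself nontrivial, so for a self-contained proof the direct computation seems unavoidable.
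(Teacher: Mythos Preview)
The paper does not prove this theorem at all: it is quoted verbatim as \cite[Thm.~1.64]{Ono1} and used as a black box, so there is no ``paper's own proof'' to compare against. Your proposal therefore goes well beyond what the paper does.

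That said, your outline is the standard route (essentially the argument of Newman and Gordon--Hughes, cf.\ also Ligozat): reduce to computing the multiplier $\mu(\gamma)=\prod_{\delta\mid N}\epsilon(\gamma_\delta)^{r_\delta}$, kill the $24$th-root-of-unity phase using the two congruence hypotheses, and then massage the remaining product of Jacobi/Kronecker symbols into $\left(\frac{(-1)^k s}{d}\right)$ via reciprocity. The organizing observations (that $\mu$ is a homomorphism because $k\in\Z$, and that $T$ and $-I$ are handled directly) are correct and helpful.

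One slip to flag: you write $c_\delta^2=(c^2/N)(N/\delta)$, which is false as stated ($c_\delta^2=c^2/\delta^2$). What is true, and what you actually need, is that the term in the exponent coming from $-(\delta b)d(c_\delta^2-1)$ contributes $-bd\cdot\delta c_\delta^2\cdot r_\delta=-bd\,(c^2/N)(N/\delta)r_\delta$ plus $bd\,\delta r_\delta$; the first piece is an integer multiple of $\sum_\delta (N/\delta)r_\delta$ and the second of $\sum_\delta \delta r_\delta$, so both vanish mod $24$ by hypothesis. With that correction the phase computation goes through. Your caveat about the parity of $c_\delta$ forcing sub-cases is accurate (when $N$ is odd all $c_\delta$ share the parity of $c$, which simplifies things; even $N$ is genuinely messier), and your honest assessment that the reciprocity bookkeeping is where all three hypotheses get consumed is exactly right.
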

This theorem provides conditions on when an $\eta$-quotient is a weakly modular functions. However, to answer the question of when an $\eta$-quotient is a modular form we need the following theorem which provides information concerning the order of vanishing at the cusps of $\Gamma_0(N)$.
\begin{theorem}\label{Cvanishing}\cite[Thm. 1.65]{Ono1}
Let $f(z)$ be an $\eta$-quotient satisfying the conditions in the previous theorem. Let $c,d,\in \N$ with $d|N$ and $(c,d)=1$. Then, the order of vanishing of $f(z)$ at the cusp $\frac{c}{d}$ is 
 $$
 \frac{N}{24}\sum_{\delta |N}\frac{(d,\delta)^2r_{\delta}}{(d,\frac{N}{d})d\delta}
 $$
\end{theorem}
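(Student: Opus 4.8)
The plan is to reduce the order of vanishing of $f$ at the cusp $\frac{c}{d}$ to a $q$-expansion computation at $\{i\infty\}$. Choose $b,e\in\Z$ with $ce-bd=1$ and set $\gamma=\begin{pmatrix}c&b\\d&e\end{pmatrix}\in\SL_2(\Z)$, so that $\gamma\cdot i\infty=\frac{c}{d}$; by the definition recalled in Section \ref{ModForm}, the behavior of $f$ at $\frac{c}{d}$ is governed by the expansion of $f|_k\gamma$ at $\{i\infty\}$, and the congruence conditions of Theorem \ref{GHNew} guarantee that $f$ is weakly modular so that this makes sense. Since $f=\prod_{\delta\mid N}\eta(\delta z)^{r_\delta}$ and $k=\frac12\sum_{\delta\mid N}r_\delta$, we can distribute the automorphy factor evenly and write $f|_k\gamma$ as a nonzero constant times $\prod_{\delta\mid N}g_\delta(z)^{r_\delta}$, where $g_\delta(z)=(dz+e)^{-1/2}\eta(\delta\gamma z)$. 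By multiplicativity of leading terms it then suffices to find the leading exponent of $q=e^{2\pi iz}$ in each $g_\delta$, and the answer is the weighted sum with weights $r_\delta$.

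For the key step I work with the weight-$\frac12$ slash operator extended to $\GL_2^+(\Q)$ in the standard way (with a $(\det)^{1/4}$ normalization, which makes it a right action); throughout, all multiplier-system and branch ambiguities contribute only nonzero constants and so do not affect orders. Writing $V_\delta=\begin{pmatrix}\delta&0\\0&1\end{pmatrix}$, we have $g_\delta=(\text{const})\cdot\eta|_{1/2}(V_\delta\gamma)$, where $V_\delta\gamma=\begin{pmatrix}\delta c&\delta b\\d&e\end{pmatrix}$ has determinant $\delta$. Since $\gcd(c,d)=1$ forces $\gcd(\delta c,d)=\gcd(\delta,d)$, Hermite normal form (row reduction over $\SL_2(\Z)$) produces $\sigma\in\SL_2(\Z)$ and $h\in\Z$ with $V_\delta\gamma=\sigma\begin{pmatrix}g&h\\0&\delta/g\end{pmatrix}$, where $g=(d,\delta)$. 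Applying the transformation law for $\eta$ under $\SL_2(\Z)$ from Section \ref{EtaQuotient} to the factor $\sigma$ and then evaluating the remaining upper-triangular matrix directly yields
\[
g_\delta(z)=(\text{nonzero const})\cdot\eta\!\left(\frac{g^2}{\delta}z+\frac{gh}{\delta}\right).
\]
Because $\eta(w)=e^{2\pi iw/24}\prod_{n\geq 1}(1-e^{2\pi inw})$, the right-hand side is a nonzero multiple of $q^{\,g^2/(24\delta)}$ to leading order, so $g_\delta$ contributes $\dfrac{(d,\delta)^2}{24\delta}$ to the $q$-order of $f|_k\gamma$ at $\{i\infty\}$.

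Summing, the smallest power of $q=e^{2\pi iz}$ occurring in $f|_k\gamma$ is $\sum_{\delta\mid N}\dfrac{(d,\delta)^2 r_\delta}{24\delta}$. Finally, the order of vanishing at the cusp $\frac{c}{d}$ of $\Gamma_0(N)$, in the normalization used for valence and dimension formulas, is measured in the local uniformizer $e^{2\pi iz/h}$, where $h=h_{c/d}$ is the width of that cusp; hence it equals $h_{c/d}$ times the quantity just computed. For $d\mid N$ the standard cusp-width formula gives $h_{c/d}=\dfrac{N}{\gcd(d^2,N)}=\dfrac{N}{d\,(d,N/d)}$, and therefore the order of vanishing is
\[
\frac{N}{d\,(d,N/d)}\sum_{\delta\mid N}\frac{(d,\delta)^2 r_\delta}{24\delta}=\frac{N}{24}\sum_{\delta\mid N}\frac{(d,\delta)^2 r_\delta}{(d,N/d)\,d\,\delta},
\]
which is the asserted formula. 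I would verify $h_{c/d}=N/\gcd(d^2,N)$ either by computing the stabilizer of $\frac{c}{d}$ in $\Gamma_0(N)$ directly or by citing the standard formula, and I would double-check that $\gcd(d^2,N)=d\,(d,N/d)$ when $d\mid N$.

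The step I expect to be the main obstacle is not conceptual but bookkeeping: correctly tracking the eta multiplier $\epsilon(\cdot)$, the $(\det)^{1/4}$ factors, and the automorphy factors $(dz+e)^{1/2}$ through the identity $g_\delta=(\text{const})\,\eta|_{1/2}(V_\delta\gamma)$ and the factorization $V_\delta\gamma=\sigma\cdot(\text{upper triangular})$, together with settling once and for all which normalization of ``order of vanishing at a cusp'' is intended, so that the extra width factor enters in exactly the form $\dfrac{N}{(d,N/d)\,d}$. Each of these is elementary, but each is a natural place for a stray constant or an extra power of the width to slip in, and getting the final fraction to match the stated formula on the nose is where the care is needed.
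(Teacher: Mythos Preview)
The paper does not prove this theorem at all; it is stated as a citation from \cite[Thm.~1.65]{Ono1} and used as a black box throughout. Your proposal supplies a correct proof along the standard lines: transport the cusp $c/d$ to $i\infty$ via $\gamma\in\SL_2(\Z)$, factor $V_\delta\gamma$ as an $\SL_2(\Z)$ matrix times an upper-triangular matrix of determinant $\delta$ with top-left entry $(d,\delta)$, read off the leading $q$-exponent $(d,\delta)^2/(24\delta)$ from the $\eta$-expansion, sum with weights $r_\delta$, and then multiply by the cusp width $N/\gcd(d^2,N)=N/(d\,(d,N/d))$ to pass to the local uniformizer. This is exactly the argument one finds in the standard references (Ono, K\"ohler), so there is nothing to compare against in the paper itself. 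Your own caveat about the bookkeeping---the multiplier $\epsilon(\sigma)$, the $(\det)^{1/4}$ normalization, and the cusp-width convention---is well placed; these are the only places a sign or constant can go astray, and none of them affects the order of vanishing.
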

We shall denote the order of vanishing for $d |N$ as $v_d$.

\section{Results}
\subsection{$N=p$}
For this section, we shall assume that the level of the modular forms being discussed is always a prime number, $p$. For this section, Theorems \ref{GHNew} and \ref{Cvanishing} imply that we need to satisfy the following: \begin{align*}
\frac{1}{2}(r_1+r_p) &=k\\
r_1 + pr_p &\equiv 0 \pmod{24}\\
pr_1 + r_p &\equiv 0 \pmod{24}\\
v_1 = \frac{1}{24}(pr_1 + r_p) &\geq 0\\
v_p = \frac{1}{24}(r_1+pr_p) &\geq 0
\end{align*}
We start the discussion for counting $\eta$-quotients at level $\Gamma_1(p)$ by looking at possible conditions on $k$. 
\begin{theorem}\label{kcond}
$\eta$-quotients which satisfy the following conditions:
\begin{enumerate}
\item $p\equiv 11,23 \pmod{24}$
\item $p\equiv 2, 17 \pmod{24}$ and $k \equiv 0 \pmod{4}$
\item $p\equiv 3,7,19 \pmod{24}$ and $k \equiv 0 \pmod{3}$
\item $p\equiv 5 \pmod{24}$ and $k \equiv 0 \pmod{2}$
\item $p\equiv 13 \pmod{24}$ and $k \equiv 0 \pmod{6}$
\item $p\equiv 1 \pmod{24}$ and $k \equiv 0 \pmod{12}$
\end{enumerate}
are weakly holomorphic modular functions of level $\Gamma_1(p)$.
\end{theorem}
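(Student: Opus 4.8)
The plan is to translate ``$f$ is a weakly holomorphic modular function of level $\Gamma_1(p)$'' into a pair of linear congruences modulo $24$ on the exponents, and then to read off the admissible weights $k$ by a case analysis on $p\bmod 24$.

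First I would reduce to congruences. An $\eta$-quotient of level $p$ is $f(z)=\eta^{r_1}(z)\,\eta^{r_p}(pz)$ with $r_1,r_p\in\Z$; it is holomorphic and non-vanishing on $\Hc$, and by Theorem \ref{Cvanishing} it has a well-defined finite order at every cusp, hence is meromorphic there. So, with $k=\tfrac12(r_1+r_p)$, the only conditions for $f\in M^{!}_k(\Gamma_1(p))$ are $k\in\Z$ together with the two congruences of Theorem \ref{GHNew}:
$$r_1+pr_p\equiv 0\pmod{24},\qquad pr_1+r_p\equiv 0\pmod{24}.$$
These give $f|_k\gamma=\chi(d)f$ on $\Gamma_0(p)$ for the quadratic symbol $\chi(n)=\big(\tfrac{(-1)^k p^{r_p}}{n}\big)$, which --- using the reduction of the first congruence modulo $4$ together with $r_1+r_p=2k$, i.e.\ $k\equiv\tfrac{p-1}{2}r_p\pmod 2$ --- one checks is a character modulo $p$, so that $f$ is genuinely modular on $\Gamma_1(p)$. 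Thus the question becomes: for which $k\in\Z$ do there exist $r_1,r_p\in\Z$ with $r_1+r_p=2k$ satisfying the two congruences?

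Next I would parametrize and solve. Writing $r_1=k+m$, $r_p=k-m$ runs over exactly the integer pairs with $r_1+r_p=2k$, and the two congruences become $(p+1)k-(p-1)m\equiv0$ and $(p+1)k+(p-1)m\equiv0\pmod{24}$. Taking their sum and difference, this system in the unknown $m$ is equivalent to
$$(p-1)m\equiv(p+1)k\pmod{24}\qquad\text{together with}\qquad (p+1)k\equiv0\pmod{12}.$$
The first is solvable in $m$ iff $\gcd(p-1,24)\mid(p+1)k$, and since $p+1\equiv2\pmod{p-1}$ this is the same as $\gcd(p-1,24)\mid 2k$. Hence a level-$p$ $\eta$-quotient of weight $k$ lies in $M^{!}_k(\Gamma_1(p))$ for some choice of exponents if and only if
$$\gcd(p-1,24)\mid 2k\qquad\text{and}\qquad (p+1)k\equiv0\pmod{12}.$$

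Finally I would run this criterion over $p\equiv1,5,7,11,13,17,19,23\pmod{24}$ (the cases $p=2,3$ are included, as only $\gcd$'s appear): e.g.\ $p\equiv1$ forces $\gcd(p-1,24)=24$, so $12\mid k$, and then $(p+1)k\equiv2k\equiv0\pmod{12}$ is automatic --- case (6); $p\equiv13$ forces $\gcd=12$, so $6\mid k$, again automatically satisfying the second condition --- case (5); $p\equiv11,23$ force $\gcd=2$ and $(p+1)k\equiv0\pmod{12}$ already, so no condition --- case (1); the remaining residues collapse, by the same computation, into cases (2), (3), (4), and grouping the eight classes by the resulting divisibility on $k$ gives exactly the six families claimed. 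None of this is hard; the single point that truly needs care is the claim in the reduction step that the nebentypus of Theorem \ref{GHNew} is trivial on $\Gamma_1(p)$ rather than merely on $\Gamma_0(p)$ --- this is where the $\pmod4$ content of the $24$-congruences gets used --- so I would settle that before starting the $\pmod{24}$ bookkeeping.
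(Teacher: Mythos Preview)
Your argument is correct and arrives at the same six families, but by a different and cleaner route than the paper's. The paper works directly with the orders of vanishing via $24v_1=2k+(p-1)r_1$, $24v_p=2kp+(1-p)r_1$, and their sum $24(v_1+v_p)=2k(p+1)$, and then plugs in each residue class $p\pmod{24}$ one at a time, checking both directions case by case (ten cases in the forward direction, six in the reverse). You instead derive a uniform closed-form criterion --- $\gcd(p-1,24)\mid 2k$ together with $12\mid(p+1)k$ --- via the parametrization $r_1=k+m$, $r_p=k-m$ and a sum/difference trick on the two congruences, and only then specialise. This buys you a single clean statement that explains \emph{why} the six cases look the way they do, and makes the reverse direction automatic; the paper's approach, by contrast, keeps the arithmetic completely explicit and self-contained in each case. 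Your proof is also more careful than the paper's on one point: you flag and settle the question of whether the nebentypus produced by Theorem~\ref{GHNew} is genuinely a character modulo $p$ (so that $f$ really lies in $M_k^{!}(\Gamma_1(p))$ rather than merely satisfying a transformation law on $\Gamma_0(p)$), using the $\pmod 4$ content of the congruences; the paper passes over this silently.
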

\begin{proof}
($\rightarrow$) Suppose that $\etaq{}{r_1}\etaq{p}{r_p}\in M_k (\Gamma_1(p))$. Then we have \begin{align}
24v_1 &= 2k + (p-1)r_1.\label{kcondeq2}\\
24v_p &= 2kp + (1-p)r_1 \label{kcondeq1}
\end{align} We note that we can combine both equations to get \begin{equation}\label{combkcond}
24(v_1 + v_p) = 2k (p+1).
\end{equation}
We consider the 10 possible cases for primes: \begin{itemize}
\item Case 1: Suppose $p\equiv 1\pmod{24}$. Then we note that we can plug 1 in for $p$ in (\ref{combkcond}) to get that \[24 (v_1 + v_p) = 4k.\] This tells us that $k$ must be a multiple of 6. However, when we plug 1 in for $p$ in (\ref{kcondeq1}) and (\ref{kcondeq2}), we get that $24|2k$. Therefore, it must be that $k\equiv 0 \pmod{12}$.
\item Case 2: Suppose $p=2$. Then we note that we can plug 2 in for $p$ in (\ref{combkcond}) to get that \[24 (v_1 + v_p) = 6k.\] This tells us that $k$ must be a multiple of 4. When we plug 2 in for $p$ and $4n$ in for $k$ in (\ref{kcondeq1}) and (\ref{kcondeq2}), we see that there exists a sufficient $r_1$ such that $24|(16n\mp r_1)$. Therefore, it must be that $k\equiv 0 \pmod{4}$.
\item Case 3: Suppose $p=3$. Then we note that we can plug 3 in for $p$ in (\ref{combkcond}) to get that \[24 (v_1 + v_p) = 8k.\] This tells us that $k$ must be a multiple of 3. When we plug 3 in for $p$ and $3n$ in for $k$ in (\ref{kcondeq1}) and (\ref{kcondeq2}), we see that there exists a sufficient $r_1$ such that $24|(18n\mp 2r_1)$. Therefore, it must be that $k\equiv 0 \pmod{3}$.
\item Case 4: Suppose $p\equiv 5\pmod{24}$. Then we note that we can plug 5 in for $p$ in (\ref{combkcond}) to get that \[24 (v_1 + v_p) = 12k.\] This tells us that $k$ must be a multiple of 2. When we plug 5 in for $p$ and $2n$ in for $k$ in (\ref{kcondeq1}) and (\ref{kcondeq2}), we see that there exists a sufficient $r_1$ such that $24|(20 n\mp 4r_1)$. Therefore, it must be that $k\equiv 0 \pmod{2}$.
\item Case 5: Suppose $p\equiv 7\pmod{24}$. Then we note that we can plug 7 in for $p$ in (\ref{combkcond}) to get that \[24 (v_1 + v_p) = 16k.\] This tells us that $k$ must be a multiple of 3. When we plug 7 in for $p$ and $3n$ in for $k$ in (\ref{kcondeq1}) and (\ref{kcondeq2}), we see that there exists a sufficient $r_1$ such that $24|(42n\mp 6r_1)$. Therefore, it must be that $k\equiv 0 \pmod{3}$.
\item Case 6: Suppose $p\equiv 11\pmod 24$. Then we note that we can plug 11 in for $p$ in (\ref{combkcond}) to get that \[24 (v_1 + v_p) = 24k.\] This tells us that $k$ must be an integer. When we plug 11 in for $p$ and $n$ in for $k$ in (\ref{kcondeq1}) and (\ref{kcondeq2}), we see that there exists a sufficient $r_1$ such that $24|(22\mp 10r_1)$. Therefore, it must be that $k\in\Z$.
\item Case 7: Suppose $p\equiv 13 \pmod{24}$. Then we note that we can plug 13 in for $p$ in (\ref{combkcond}) to get that \[24 (v_1 + v_p) = 28k \equiv 4k \pmod{24}.\] This tells us that $k$ must be a multiple of 6. When we plug 13 in for $p$ and $6n$ in for $k$ in (\ref{kcondeq1}) and (\ref{kcondeq2}), we see that there exists a sufficient $r_1$ such that $24|(156n\mp 12r_1)$. Therefore, it must be that $k\equiv 0 \pmod{6}$.
\item Case 8: Suppose $p\equiv 17\pmod{24}$. Then we note that we can plug 17 in for $p$ in (\ref{combkcond}) to get that \[24 (v_1 + v_p) = 36k \equiv 12k \pmod{24}.\] This tells us that $k$ must be a multiple of 2. However, when we plug 17 in for $p$ and $n$ in for $k$ in (\ref{kcondeq1}) and (\ref{kcondeq2}), we get that $24|68n \mp 16r_1$ which cannot be the case since no multiple of 16 is congruent to $10\pmod{24}$. When we consider $k$ to be a multiple of 4 instead, we get that $24|156n \mp 16r_1$. Therefore, it must be that $k\equiv 0 \pmod{4}$.
\item Case 9: Suppose $p \equiv 19\pmod{24}$. Then we note that we can plug 19 in for $p$ in (\ref{combkcond}) to get that \[24 (v_1 + v_p) = 40k \equiv 16k \pmod{24}.\] This tells us that $k$ must be a multiple of 3. When we plug 19 in for $p$ and $3n$ in for $k$ in (\ref{kcondeq1}) and (\ref{kcondeq2}), we see that there exists a sufficient $r_1$ such that $24|(114n\mp 18r_1)$. Therefore, it must be that $k\equiv 0 \pmod{3}$.
\item Case 10: Suppose $p\equiv 23\pmod{24}$. Then we note that we can plug 23 in for $p$ in (\ref{combkcond}) to get that \[24 (v_1 + v_p) = 48k.\] This tells us that $k$ must be an integer. When we plug 23 in for $p$ and $n$ in for $k$ in (\ref{kcondeq1}) and (\ref{kcondeq2}), we see that there exists a sufficient $r_1$ such that $24|(46n\mp 22r_1)$. Therefore, it must be that $k\in \Z$.
\end{itemize}
($\leftarrow$) Suppose that the conditions given hold. We note that we need to show that we can satisfy (\ref{kcondeq1}) and (\ref{kcondeq2}). Since we are suppose sufficient $r_1$ to satisfy the order of vanishing at the cusps, we can just show that our conditions satisfy (\ref{combkcond}). We consider 6 cases. \begin{itemize}
\item Case 1: $p\equiv 11,23 \pmod{24}$. Plugging in our possible $p$ and let $k$ be an integer, we get $24| 24k$ when  we plug in 11, and $24| 48k$ when we plug in 23. Both of which hold true for all integers $k$.
\item Case 2: $p\equiv 2,17 \pmod 24$ and $k\equiv 0 \pmod{4}$. Plugging in our possible $p$ and $k$, we get $24| 24n$ when  we plug in 2, and $24| 144k$ when we plug in 17. Both of which hold true for all integers $n$.
\item Case 3: $p\equiv 3,7,19 \pmod{24}$ and $k \equiv 0\pmod{3}$. Plugging in our possible $p$ and $k$, we get $24| 24n$ when  we plug in 3, $24| 48n$ when we plug in 7, and $24|120n$ when we plug in 19. Both of which hold true for all integers $n$.
\item Case 4: $p\equiv 5 \pmod{24}$ and $k\equiv 0\pmod{2}$. Plugging in our possible $p$ and $k$, we get $24| 24n$. Both of which hold true for all integers $n$.
\item Case 5: $p\equiv 13 \pmod{24}$ and $k\equiv 0\pmod{6}$. Plugging in our possible $p$ and $k$, we get $24| 168n$. Both of which hold true for all integers $n$.
\item Case 6: $p\equiv 1 \pmod{24}$ and $k\equiv 0\pmod{12}$. Plugging in our possible $p$ and $k$, we get $24| 24n$. Both of which hold true for all integers $n$.
\end{itemize} Thus for our given conditions, we get that there is an $\eta$-quotient in $M_k (p,\chi)$ where $\chi$ can be calculated.
\end{proof}

This theorem provides us with the congruence condition on $k$ that needs to be satisfied, but this condition alone is not sufficient for saying that an $\eta$-quotient is in $M_k(\Gamma_1(p))$. However, in combination with the orders of vanishing, we can find a count for the number of $\eta$-quotients.

Consider the system of equations
\begin{equation}\label{vanord}
\begin{pmatrix}1&p\\p&1\end{pmatrix}\begin{pmatrix}r_1\\r_p\end{pmatrix}=\begin{pmatrix}24v_1\\24v_p\end{pmatrix},\end{equation}
where we think of $v_1$ and $v_p$ as the orders of vanishing at the cusps $i\infty$ and $\frac{1}{p}$. Note, we obtained this system of equations by combining our congruence conditions from Theorem \ref{GHNew} and order of vanishing conditions from Theorem \ref{Cvanishing}.

Using this we can view the orders of vanishing as forming the line $v_1+v_p = \frac{k(p+1)}{12}$, shown in the figure. Note, this is explained in more detail in the proof of the next theorem.

\begin{figure}[h] \begin{center}
  \begin{tikzpicture}[scale=2, thick]
  \draw (0,0) --(2,0) node[midway,below]{$\frac{k(p+1)}{12}$} (2,0)node[right,right]{$v_1$}  -- (1,0) node[midway,below]{} -- (0,1) node[midway,above]{} -- (0,2) node[above,above]{$v_p$}
-- (0,0) node[midway,left]{$\frac{k(p+1)}{12}$};
  \end{tikzpicture}\end{center}
\end{figure}
To obtain a count of $\eta$-quotients which are modular forms, we must consider not only the integer lattice points which lie on the line, but the integer lattice points on the line which correspond to integer $r_1,r_p$ via Equation \ref{vanord}.

Note, it is immediate that cusp forms occur on the interior of the line, and non-cuspidal modular forms occur at the end points. For this reason it is useful to perform the counts of cusp forms and non-cuspidal modular forms separately.

\begin{theorem} \label{etaPcount}
Let $p>3$ be a prime. Let $k=hk'$ where $h$ is the needed divisor of $k$ given by Theorem \ref{kcond}. Let $p-1 = 2hd$ where $d$ is the integer which satisfies this equality. Also, let $c$ be the smallest positive integer representative of $\frac{k'h}{12}$ modulo $d$.
\begin{enumerate}
\item For $c= \frac{k(p+1)}{12} - \floor{\frac{k(p+1)}{12d}} d$, the  number of $\eta$-quotients in $S_k(\Gamma_1(p))$ is \[\frac{k(p+1)}{12d}-1.\]
\item For $c < \frac{k(p+1)}{12} - \floor{\frac{k(p+1)}{12d}} d$, the number of $\eta$-quotients in $S_k(\Gamma_1(p))$ is \[ \ceiling{\frac{k(p+1)}{12d}}.\]
\item For $c > \frac{k(p+1)}{12} - \floor{\frac{k(p+1)}{12d}} d$, the number of $\eta$-quotients in $S_k(\Gamma_1(p))$ is \[ \floor{\frac{k(p+1)}{12d}}.\]
\end{enumerate}
\end{theorem}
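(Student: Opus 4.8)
The plan is to recast the question as a lattice-point count on the segment pictured above and then evaluate that count. First I would note that the $\eta$-quotient $\eta(z)^{r_1}\eta(pz)^{r_p}$ is determined by, and determines, the pair $(r_1,r_p)\in\Z^2$, and that the matrix in (\ref{vanord}) is invertible over $\Q$; hence counting $\eta$-quotients in $S_k(\Gamma_1(p))$ amounts to counting pairs of \emph{positive} integers $(v_1,v_p)$ with $v_1+v_p=V:=\frac{k(p+1)}{12}$ whose preimage $(r_1,r_p)$ under (\ref{vanord}) lies in $\Z^2$. Here positivity of $v_1$ and $v_p$ is exactly the cuspidality requirement; once $v_1,v_p\in\Z$ the pair $(r_1,r_p)$ automatically satisfies the weight identity and the two congruences of Theorem \ref{GHNew}, and $V\in\Z$ by the converse half of Theorem \ref{kcond}. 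Since $v_p=V-v_1$, we must count $v_1\in\Z$ with $0<v_1<V$ and $(r_1,r_p)\in\Z^2$.

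Next I would make the integrality of $(r_1,r_p)$ explicit. Combining $r_1+r_p=2k$ with the system (\ref{vanord}) shows that $(r_1,r_p)\in\Z^2$ if and only if $(p-1)\mid 12(2v_1-V)$, and since $12(2v_1-V)=24v_1-k(p+1)$ and $k(p+1)\equiv 2k\pmod{p-1}$, this is the congruence $12v_1\equiv k\pmod{\frac{p-1}{2}}$. Writing $\frac{p-1}{2}=hd$ with $h$ the divisor of $k$ from Theorem \ref{kcond}, and using that $h\mid 12$ and $h\mid k$ in every case listed there, this simplifies to $\frac{12}{h}v_1\equiv k'\pmod d$. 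The arithmetic heart of the proof is that $\gcd(\frac{12}{h},d)=1$ in each of the six cases; this is a short case check on $p\bmod 24$, reading off $d$ modulo $\frac{12}{h}$ from $p-1=2hd$. Granting it, the condition becomes the single congruence $v_1\equiv c\pmod d$ with $c$ the least residue of $\frac{k'h}{12}=k'(\frac{12}{h})^{-1}$ modulo $d$, as in the statement. I would also record here the identity $2c\equiv V\pmod d$: it follows by the same manipulation ($12c\equiv k\pmod{hd}$ and $12V\equiv 2k\pmod{hd}$, so $V-2c\equiv 0\pmod d$), and conceptually it reflects the $v_1\leftrightarrow v_p$ symmetry of the problem (swap the two cusps).

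It remains to count integers $v_1$ with $0<v_1<V$ lying in a fixed residue class $c$ modulo $d$. Writing $V=\lfloor V/d\rfloor d+s$ with $s=\frac{k(p+1)}{12}-\lfloor\frac{k(p+1)}{12d}\rfloor d$, one gets (when $c\ge 1$) exactly $\lceil\frac{V-c}{d}\rceil=\lfloor V/d\rfloor+\lceil\frac{s-c}{d}\rceil$ admissible values, and the three cases of the theorem drop out by comparing $c$ with $s$: if $c<s$ the ceiling term equals $1$, and since then $s\neq 0$ this gives $\lceil V/d\rceil$; if $c>s$ it equals $0$, giving $\lfloor V/d\rfloor$; and the case $c=s$ can only occur when $c=s=0$ (forced by $2c\equiv V\pmod d$), i.e.\ $d\mid V$, in which case the admissible $v_1$ are $d,2d,\dots,V-d$ and there are $\frac{V}{d}-1$ of them.

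The chief obstacle is bookkeeping rather than depth: verifying $\gcd(\frac{12}{h},d)=1$ uniformly over the residue classes of $p$ modulo $24$, and, more delicately, the boundary case $c=s$, where one must realize it forces $d\mid V$ (otherwise the stated answer $\frac{k(p+1)}{12d}-1$ would not even be an integer), the congruence $2c\equiv V\pmod d$ being the clean way to see this. The analogous counts of non-cuspidal $\eta$-quotients, which occur at the endpoints $v_1\in\{0,V\}$, follow by the same argument restricted to those two points.
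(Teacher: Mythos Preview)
Your approach is essentially the same as the paper's: reduce to the congruence $v_1\equiv c\pmod d$ on the segment $v_1+v_p=\frac{k(p+1)}{12}$ and count admissible $v_1$. In fact your write-up is more careful than the paper's proof in two places: you make explicit the check that $\gcd(12/h,d)=1$ (which the paper uses tacitly when passing from $\frac{12}{h}v_1\equiv k'\pmod d$ to $v_1\equiv \frac{k'h}{12}\pmod d$), and your identity $2c\equiv V\pmod d$ cleanly explains why the boundary case $c=s$ forces $c=s=0$ and hence $d\mid V$, which the paper's proof simply assumes by writing Case~1 as ``Suppose $0=\frac{k(p+1)}{12}-\lfloor\frac{k(p+1)}{12d}\rfloor d$''.
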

\begin{proof}
Let us consider $M_k(p)$ where $k$ is appropriately chosen based on $p$.
We have from Theorem \ref{Cvanishing} the orders of vanishing are given by $24v_1 = (1-p) r_1 + 2pk$, which could be rewritten as $2pk - 24v_1 = (p-1)r_1$. Since by definition $r_1\in\Z$, we have that $2pk-24v_1 \equiv 0 \pmod{p-1}$. From here, we have the following \begin{align*}
2k-24v_1 &\equiv 0 \pmod{p-1}\\
v_1 &\equiv \frac{2h}{24}k' \pmod{d}
\end{align*}
This congruence tells us what $v_1$ needs to be congruent to modulo $d$.
We note that the orders of vanishing sum together to get \[v_1 + v_p = \frac{k(p+1)}{12}.\] This gives us a defined line in $\R^2$. Furthermore, since we are only considering cusp forms, we can say that $v_1$, $v_p >0$. The number of points on this line which satisfy this inequality and the congruence is the number of $\eta$-quotients. We now consider three cases:
\begin{itemize}
\item Case 1: Suppose $0 = \frac{k(p+1)}{12} - \floor{\frac{k(p+1)}{12d}} d$. Then, we have that $v_1 \equiv 0 \pmod{\frac{p-1}{2h}}$. Furthermore, we note that $\frac{p-1}{2h} \left| \frac{k(p+1)}{12}\right.$. Thus, we have that the number of points which match our congruence is $\frac{k(p+1)}{12d}$. However, we will note that one of these points gives us $v_p=0$, which is not desired. Therefore, the number of $\eta$-quotients that are in $S_k(\Gamma_1(p))$ is \[\frac{k(p+1)}{12d}-1.\]
\item Case 2: Suppose $c<\frac{k(p+1)}{12} - \floor{\frac{k(p+1)}{12d}} d$. Then we note that $\floor{\frac{k(p+1)}{12d}}d$ is less than $\frac{k(p+1)}{12}$. However, since $c<\frac{k(p+1)}{12} - \floor{\frac{k(p+1)}{12d}} d$, we have another point to count that is between $\floor{\frac{k(p+1)}{12d}}d$ and $\frac{k(p+1)}{12}$. Therefore, the number of $\eta$-quotients that are in $S_k(\Gamma_1(p))$ is \[\ceiling{\frac{k(p+1)}{12d}}.\]
\item Case 3: Suppose $c>\frac{k(p+1)}{12} - \floor{\frac{k(p+1)}{12d}} d$. Then we note that $\floor{\frac{k(p+1)}{12d}}d$ is less than $\frac{k(p+1)}{12}$. Since $c>\frac{k(p+1)}{12} - \floor{\frac{k(p+1)}{12d}} d$, we have no more points to count between $\floor{\frac{k(p+1)}{12d}}d$ and $\frac{k(p+1)}{12}$. Therefore, the number of $\eta$-quotients that are in $S_k(\Gamma_1(p))$ is \[\floor{\frac{k(p+1)}{12d}}.\]
\end{itemize}
\end{proof}

\begin{coro}
Let $p>3$ be a prime. Let $h$ be the needed divisor of $k$ given by Theorem \ref{kcond}. Then, $M_k(\Gamma_1(p))$ has $\eta$-quotients if \[\frac{p-1}{2h} \leq \frac{k(p+1)}{12}.\]
\end{coro}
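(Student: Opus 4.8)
The plan is to reduce the statement to the lattice-point count on the segment $v_1+v_p=\frac{k(p+1)}{12}$ that already drives the proof of Theorem~\ref{etaPcount}, and then to observe that the hypothesis forces this segment to carry at least one admissible point.

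First I would recall the setup from the proof of Theorem~\ref{etaPcount}: an $\eta$-quotient $\etaq{}{r_1}\etaq{p}{r_p}$ lies in $M_k(\Gamma_1(p))$ precisely when the pair $(v_1,v_p)$ attached to it through the system~\eqref{vanord} consists of non-negative integers and $r_1=\frac{2pk-24v_1}{p-1}$ is an integer. Indeed, the congruence condition on $k$ from Theorem~\ref{GHNew} is exactly what Theorem~\ref{kcond} supplies under our divisibility hypothesis $h\mid k$, while the two congruences $\sum_{\delta\mid p}\delta r_\delta\equiv 0$ and $\sum_{\delta\mid p}\frac{p}{\delta}r_\delta\equiv 0\pmod{24}$ read $r_1+pr_p=24v_1$ and $pr_1+r_p=24v_p$, hence hold automatically once $v_1,v_p\in\Z$. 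Writing $k=hk'$ and $p-1=2hd$ as in Theorem~\ref{etaPcount}, integrality of $r_1$ — and therefore of $r_p=2k-r_1$ — is equivalent to the single congruence $v_1\equiv c\pmod d$, where $c$ is the residue introduced there, and $L:=\frac{k(p+1)}{12}\in\Z$ by Theorem~\ref{kcond}. So the $\eta$-quotients in $M_k(\Gamma_1(p))$ are in bijection with the integers $v_1$ satisfying $0\le v_1\le L$ and $v_1\equiv c\pmod d$.

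The second step is the short arithmetic observation that the hypothesis $\frac{p-1}{2h}\le\frac{k(p+1)}{12}$ is nothing but $d\le L$. Since $\{0,1,\dots,d-1\}$ is a block of $d$ consecutive integers it contains exactly one representative of each residue class modulo $d$, in particular an integer $v_1\equiv c\pmod d$; and $\{0,1,\dots,d-1\}\subseteq[0,L]$ because $L\ge d>d-1$. This $v_1$ is admissible (possibly $v_1=0$, in which case the resulting $\eta$-quotient is non-cuspidal rather than a cusp form), so $M_k(\Gamma_1(p))$ contains an $\eta$-quotient, which is the claim. One can also read this directly off Theorem~\ref{etaPcount}: when $d\le L$ the cusp-form counts $\ceiling{L/d}$ and $\floor{L/d}$ of cases~2 and~3 are already $\ge 1$, and the case~1 count $\frac{L}{d}-1$ vanishes only when $L=d$, in which event the endpoints $v_1=0$ and $v_1=L$ supply non-cuspidal $\eta$-quotients.

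I expect the only genuinely delicate point to be this last one: the corollary concerns $M_k$ rather than $S_k$, so one must be sure that when the interior of the segment carries no admissible lattice point the endpoints still do. Phrasing the count in terms of $M_k$ from the outset — that is, permitting $v_1=0$ or $v_p=0$ — sidesteps this entirely, so the argument never really meets a substantive obstacle; what remains is only the bookkeeping of identifying the residue $c$ and noting that $c=d$ (equivalently $d\mid L$) is exactly the case where the sole admissible point is an endpoint.
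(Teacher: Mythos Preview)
Your proof is correct and follows the same route the paper indicates: the paper's entire proof is the single sentence ``This follows directly from Theorems~\ref{kcond} and~\ref{etaPcount},'' and you have simply supplied the details that justify that sentence, including the careful handling of the endpoint case $L=d$ where the only admissible lattice points give non-cuspidal forms in $M_k\setminus S_k$. There is a harmless swap of the labels $v_1$ and $v_p$ in your identification of the congruences with the system~\eqref{vanord} (the paper itself is internally inconsistent on this labeling), but by symmetry this does not affect the argument.
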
 This follows directly from Theorems \ref{kcond} and \ref{etaPcount}.

We then need to count the number of non-cusp form $\eta$-quotients.
\begin{theorem}
Let $p>3$ be a prime. Then, $M_k(\Gamma_1(p))\setminus S_k(\Gamma_1(p))$ has $\eta$-quotients if and only if $\frac{p-1}{2} | k$. Furthermore, for $k>0$, there are exactly two $\eta$-quotients in $M_k(\Gamma_1(p))\setminus S_k(\Gamma_1(p))$, which are of the form \[ \frac{\etaq{p}{\frac{2pk}{p-1}}}{\etaq{}{\frac{2k}{p-1}}},\quad\text{and}\quad \frac{\etaq{}{\frac{2pk}{p-1}}}{\etaq{p}{\frac{2k}{p-1}}}.\]
\end{theorem}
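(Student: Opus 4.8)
The plan is to read everything off the linear system \eqref{vanord} together with the line $v_1+v_p=\frac{k(p+1)}{12}$. Any $\eta$-quotient $\etaq{}{r_1}\etaq{p}{r_p}$ lying in $M_k(\Gamma_1(p))$ has orders of vanishing $(v_1,v_p)$ at the two cusps of $\Gamma_0(p)$ governed by \eqref{vanord}, and adding the two rows of \eqref{vanord} gives $(p+1)(r_1+r_p)=24(v_1+v_p)$, i.e. $v_1+v_p=\frac{k(p+1)}{12}$. Such a form lies in $M_k\setminus S_k$ exactly when it fails to vanish at one of those cusps, that is, when $v_1=0$ or $v_p=0$; for $k>0$ we have $\frac{k(p+1)}{12}>0$, so these two possibilities are distinct, while (as already noted in the excerpt) an interior lattice point has $v_1,v_p>0$ and hence gives a cusp form. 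So there are at most two exponent vectors to consider: the solutions of \eqref{vanord} with right-hand side $\left(\frac{k(p+1)}{12},0\right)$ and with right-hand side $\left(0,\frac{k(p+1)}{12}\right)$. (For $k<0$ the set $M_k$ is empty and for $k=0$ it contains only constants, so the content of the statement is the case $k>0$.)

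First I would invert the matrix, using $\begin{pmatrix}1&p\\p&1\end{pmatrix}^{-1}=\frac{1}{1-p^2}\begin{pmatrix}1&-p\\-p&1\end{pmatrix}$ and the simplification $\frac{24}{1-p^2}\cdot\frac{k(p+1)}{12}=\frac{2k(p+1)}{(1-p)(1+p)}=-\frac{2k}{p-1}$. The endpoint $v_p=0$ then produces $(r_1,r_p)=\left(-\frac{2k}{p-1},\frac{2pk}{p-1}\right)$, which is the first $\eta$-quotient in the statement, and the endpoint $v_1=0$ produces $(r_1,r_p)=\left(\frac{2pk}{p-1},-\frac{2k}{p-1}\right)$, the second. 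Both satisfy $\frac12(r_1+r_p)=k$ as required, and each defines an honest $\eta$-quotient precisely when $r_1,r_p\in\Z$; since $\gcd(p,p-1)=1$, integrality of $\frac{2pk}{p-1}$ is equivalent to that of $\frac{2k}{p-1}$, and $\frac{2k}{p-1}\in\Z$ if and only if $(p-1)\mid 2k$, i.e. $\frac{p-1}{2}\mid k$ (as $p-1$ is even). This already gives the forward direction of the biconditional, because any non-cuspidal $\eta$-quotient in $M_k(\Gamma_1(p))$ must be one of these two.

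For the converse I would check that $\frac{p-1}{2}\mid k$ is enough to satisfy every remaining hypothesis of Theorems \ref{GHNew} and \ref{Cvanishing} for both vectors. The inequalities $v_1,v_p\ge0$ hold since one coordinate is $0$ and the other is $\frac{k(p+1)}{12}\ge0$. The only conditions left are the congruences $r_1+pr_p\equiv pr_1+r_p\equiv0\pmod{24}$; but by \eqref{vanord} these say exactly $24v_1,24v_p\in24\Z$, i.e. $v_1,v_p\in\Z$, so it suffices to show $\frac{k(p+1)}{12}\in\Z$. Writing $k=\frac{p-1}{2}m$, this quantity equals $\frac{(p^2-1)m}{24}$, and since $p>3$ is prime it is coprime to $24$, so $p^2\equiv1\pmod{24}$ and the divisibility holds. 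Hence both exponent vectors give genuine elements of $M_k(\Gamma_1(p))\setminus S_k(\Gamma_1(p))$, and they are distinct since equality would force $-\frac{2k}{p-1}=\frac{2pk}{p-1}$, hence $p=-1$. Thus there are exactly two.

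The only real obstacle I anticipate is bookkeeping rather than conceptual: one must make the ``non-cuspidal $\iff$ some $v_d=0$'' reduction precise (there are genuinely only the two cusps of $\Gamma_0(p)$ recorded by Theorem \ref{Cvanishing}, and the interior-versus-endpoint dichotomy on the line handles everything else), and one must keep the signs in the inversion of \eqref{vanord} straight so that the two solutions land on exactly the quotients written in the statement rather than on their reciprocals. Once the elementary observation $p^2\equiv1\pmod{24}$ is in hand, the remaining verification is a one-line computation.
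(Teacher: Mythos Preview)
Your proposal is correct and follows essentially the same approach as the paper: reduce the non-cuspidal condition to one of $v_1,v_p$ being zero, solve the linear system at each endpoint to obtain the two exponent vectors, read off the integrality condition $\tfrac{p-1}{2}\mid k$, and for the converse verify the congruences via $p^2\equiv 1\pmod{24}$. Your packaging via the matrix inverse and your observation that the two mod-$24$ congruences are precisely the statement $v_1,v_p\in\Z$ make the converse verification slightly more economical than the paper's direct check, but the substance is identical.
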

\begin{proof}
($\rightarrow$) Suppose $f(x) \in M_k(\Gamma_1(p))\setminus S_k(\Gamma_1(p))$ is an $\eta$-quotient satisfying Theorem \ref{GHNew}. Then, we know that at least one of the orders of vanishing must be zero. Thus, we have two cases:
 \begin{itemize}
\item Case 1: Suppose $v_1 = 0$. Then, $pr_1 + r_p = 0$, which can be rewritten to get $(p-1)r_1 = -2k$. Therefore, we have that $\frac{p-1}{2} |k$. Furthermore, we can get that $r_1 = \frac{2k}{p-1}$ and thus, $r_p = \frac{2pk}{p-1}$. When plugging these values into $v_p$ we get \[v_p = \frac{1}{24}\left( \frac{-2k}{p-1} + \frac{2pk}{p-1}\right) = \frac{2k}{24} >0.\]
\item Case 2: Suppose $v_p = 0$. Then, $r_1 + pr_p = 0$, which can be rewritten to get $(1-p)r_1 = -2pk$. Therefore, $\frac{p-1}{2}|k$ since $p\not| (p-1)$ and therefore $p|r_1$. Furthermore, we get that $r_1 = \frac{2pk}{p-1}$, and thus $r_p = \frac{-2k}{p-1}$. When plugging these values into $v_1$ we get \[v_1 = \frac{1}{24}\left( \frac{2pk}{p-1} + \frac{-2k}{p-1}\right) = \frac{2k}{24} > 0.\]
\end{itemize}
In both cases, the number needed to divide $k$ is the same. Furthermore, both create a single $\eta$-quotient for a fixed $k$. Therefore, we have that $\frac{p-1}{2} |k$. Furthermore, there are exactly two $\eta$-quotients which result from looking at either of the orders of vanishing being zero, and they are \[ \frac{\etaq{p}{\frac{2pk}{p-1}}}{\etaq{}{\frac{2k}{p-1}}},\quad\text{and}\quad \frac{\etaq{}{\frac{2pk}{p-1}}}{\etaq{p}{\frac{2k}{p-1}}}.\]

($\leftarrow$) Suppose that $k = \frac{p-1}{2}m>0$ for some integer $m$. Also, suppose we have the two $\eta$-quotients \[ \frac{\etaq{p}{\frac{2pk}{p-1}}}{\etaq{}{\frac{2k}{p-1}}},\quad\text{and}\quad \frac{\etaq{}{\frac{2pk}{p-1}}}{\etaq{p}{\frac{2k}{p-1}}}.\] We consider each $\eta$-quotient as its own case. \begin{itemize}
\item Consider $\frac{\etaq{p}{\frac{2pk}{p-1}}}{\etaq{}{\frac{2k}{p-1}}}$. We note that $r_1+r_p = \frac{-2k}{p-1}+\frac{2pk}{p-1} = 2k$. Furthermore, \[r_1 +pr_p = \frac{-2k}{p-1}+\frac{2pk}{p-1}p = (p^2-1)m \equiv 0 \pmod{24}\] since $p$ is relatively prime to $24$; and \[pr_1+r_p = p\frac{-2k}{p-1}+\frac{2pk}{p-1} = 0 \equiv 0\pmod{24}.\] When looking at that orders of vanishing, we get \[v_1 = \frac{1}{24}(pr_1+r_p) = \frac{1}{24}\left(p\frac{-2k}{p-1}+\frac{2pk}{p-1}\right) = 0 \geq 0,\quad\text{and}\] \[v_p = \frac{1}{24}(r_1 +pr_p) = \frac{1}{24}\left(\frac{-2k}{p-1}+\frac{2pk}{p-1}p\right) = \frac{1}{24}(p^2-1)m\geq 0.\] Since our orders of vanishing are both $\geq 0$ and one of them is equal to $0$, we have that $\frac{\etaq{p}{\frac{2pk}{p-1}}}{\etaq{}{\frac{2k}{p-1}}} \in M_k(\Gamma_1(p))\setminus S_k(\Gamma_1(p))$.
\item Consider $\frac{\etaq{}{\frac{2pk}{p-1}}}{\etaq{p}{\frac{2k}{p-1}}}$. We note that $r_1+r_p = \frac{2pk}{p-1}+\frac{-2k}{p-1} = 2k$. Furthermore, \[r_1 +pr_p = \frac{2pk}{p-1}+p\frac{-2k}{p-1} = 0 \equiv 0 \pmod{24};\] and since $p$ is relatively prime to $24$, \[pr_1+r_p = p\frac{2pk}{p-1}+\frac{-2k}{p-1} = (p^2-1)m \equiv 0\pmod{24}.\] When looking at that orders of vanishing, we get \[v_1 = \frac{1}{24}(pr_1+r_p) = \frac{1}{24}\left(p\frac{2pk}{p-1}+\frac{-2k}{p-1}\right) = \frac{1}{24}(p^2-1)m \geq 0,\quad\text{and}\] \[v_p = \frac{1}{24}(r_1 +pr_p) = \frac{1}{24}\left(\frac{2pk}{p-1}+\frac{-2k}{p-1}p\right) = 0\geq 0.\] Since our orders of vanishing are both $\geq 0$ and one of them is equal to $0$, we have that $\frac{\etaq{}{\frac{2pk}{p-1}}}{\etaq{p}{\frac{2k}{p-1}}} \in M_k(\Gamma_1(p))\setminus S_k(\Gamma_1(p))$.
\end{itemize} Thus, $M_k(\Gamma_1(p))\setminus S_k(\Gamma_1(p))$ has $\eta$-quotients.
\end{proof}
From the $\eta$-quotients given in the theorem, let $k=\frac{p-1}{2}m$ where $m$ is a positive integer. Then the $\eta$-quotients have characters \[\chi_1(n) = \left(\frac{(-1)^{\frac{p-1}{2}m}p^{pm}}{n}\right),\quad\text{and}\quad \chi_2(n) = \left(\frac{(-1)^{\frac{p-1}{2}m}p^m}{n}\right),\] respectively. In the case where $m$ is even, both of the characters are guaranteed to be the trivial character. When $m$ is odd, we are guaranteed to have quadratic character.

Now that we know how many $\eta$-quotients there are and can write down what they are if needed. Now, we need to know the dimension of the space spanned by these $\eta$-quotients.
\begin{theorem}\label{plinind}
Let $p>3$ be a prime. Then, the $\eta$-quotients in $M_k(\Gamma_1(p))$ given by the previous theorems are linearly independent.
\end{theorem}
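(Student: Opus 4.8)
The approach I would take is to reduce the statement to the elementary fact that power series with pairwise distinct orders of vanishing are linearly independent, applied to the Fourier expansions at the cusp $i\infty$. First I would record the $q$-expansion of an arbitrary $\eta$-quotient from our list. From $\eta(z)=q^{1/24}\prod_{n\ge1}(1-q^n)$ one computes
\[ \etaq{}{r_1}\etaq{p}{r_p}=q^{\frac{r_1+pr_p}{24}}\prod_{n\ge1}(1-q^n)^{r_1}(1-q^{pn})^{r_p}=q^{m}\bigl(1+O(q)\bigr),\quad m:=\tfrac{r_1+pr_p}{24}\in\Z_{\ge0}, \]
so every $\eta$-quotient produced by Theorem~\ref{etaPcount} or by the theorem immediately preceding the present one is holomorphic at $i\infty$, has order of vanishing $m$ there, and has leading Fourier coefficient exactly $1$.

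Next I would show that distinct $\eta$-quotients from our list have distinct values of $m$. They all have the same weight $k$, hence all satisfy $r_1+r_p=2k$; thus if two of them, with exponent vectors $(r_1,r_p)$ and $(r_1',r_p')$, shared the same $m$, then subtracting the identities $r_1+r_p=r_1'+r_p'$ and $r_1+pr_p=r_1'+pr_p'$ would give $(p-1)r_p=(p-1)r_p'$, and since $p>3$ this forces $r_p=r_p'$ and hence $r_1=r_1'$. Equivalently, by \eqref{vanord} the assignment $(r_1,r_p)\mapsto(v_1,v_p)$ has invertible matrix $\bigl(\begin{smallmatrix}1&p\\p&1\end{smallmatrix}\bigr)$, so the $\eta$-quotients correspond to distinct integer points on the segment $v_1+v_p=\frac{k(p+1)}{12}$ of \eqref{combkcond}, the cusp forms lying in its interior and the two non-cuspidal $\eta$-quotients at its endpoints. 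In particular the $q$-expansions at $i\infty$ of the $\eta$-quotients on our list begin at pairwise distinct powers of $q$.

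Finally I would conclude. Write the $\eta$-quotients as $f_1,\dots,f_n$, ordered so that their orders of vanishing $m_1<m_2<\cdots<m_n$ at $i\infty$ strictly increase, and suppose $\sum_{i=1}^{n}c_if_i=0$ with $c_i\in\C$ not all zero. If $j$ is the least index with $c_j\ne 0$, then the coefficient of $q^{m_j}$ in $\sum_i c_if_i$ equals $c_j$, because $c_i=0$ for $i<j$ and no $f_i$ with $i>j$ contributes to the power $q^{m_j}$; since $c_j\ne0$ this contradicts $\sum_i c_if_i=0$. Hence the $\eta$-quotients are linearly independent.

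I do not expect a genuine obstacle in this argument: the only points requiring any care are verifying that the leading Fourier coefficient is a nonzero constant — it equals $1$, directly from the product formula — and that the $2\times2$ matrix relating exponent vectors to orders of vanishing is invertible for $p>3$. It is also worth remarking that the argument is insensitive to the Nebentypus characters carried by the individual $\eta$-quotients, so it yields linear independence inside the full space $M_k(\Gamma_1(p))=\bigoplus_{\chi}M_k(p,\chi)$, not merely within a single character eigenspace.
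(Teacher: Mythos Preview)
Your argument is correct and follows essentially the same path as the paper's proof: both observe that the $\eta$-quotients in $M_k(\Gamma_1(p))$ have pairwise distinct orders of vanishing at $i\infty$, arrange them in increasing order, and read off linear independence from the resulting echelon structure of the $q$-expansions. The paper additionally invokes the Sturm bound to justify truncating the Fourier expansions to a finite matrix, but this step is inessential to the argument, and your direct version avoids it cleanly.
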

\begin{proof}
Suppose that we are looking at $\eta$-quotients in $M_k(\Gamma_1(p))$ for $p>3$ a prime. Without loss of generality, we look at the Fourier series given by the point at $\infty$. By using the Sturm Bound, we get that we need to compare the first $\floor{\frac{pk}{12}}+1$ terms of each Fourier series. We can pick a cusp and order the $\eta$-quotients increasingly by looking at the order of vanishing. We can then create a matrix, $A$, where the $i,j$-th entry represents $a(j)$ in the $i$-th $\eta$-quotient's Fourier series. Since all of the $\eta$-quotients have different orders of vanishing and they are in increasing order, we get that $A$ is in echelon form. Furthermore, none of the rows change to non-zero  at the same time. This tells us that all the rows are linearly independent. Thus all of the $\eta$-quotients are linearly independent.
\end{proof}
The following corollaries can all be obtained by comparing comparing dimension formulas with our counts and applying the previous theorem.
\begin{coro}
Let $p\geq 5$ be a prime. Denote the space of level $p$, weight $k$ $\eta$-quotients by $\eta_k(p)$. Then, 
\begin{enumerate}
\item
If $p\equiv 3\pmod{4}$, then taking the limit over odd $k$ in the appropriate congruence class from Theorem \ref{kcond} we have
$$\lim_{k\rightarrow \infty}\frac{\dim\eta_k(p)}{\dim S_k\left(p,\left(\frac{\cdot}{p}\right)\right)}=\frac{2h}{p-1},$$
\item
If $p\equiv 3\pmod{4}$, then taking the limit over even $k$ in the appropriate congruence class from Theorem \ref{kcond} we have
$$\lim_{k\rightarrow \infty}\frac{\dim\eta_k(p)}{\dim S_k(\Gamma_0(p))}=\frac{2h}{p-1}.$$
\item
If $p\equiv 1\pmod{4}$, then taking the limit over all $k$ in the appropriate congruence class from Theorem \ref{kcond} we have
$$\lim_{k\rightarrow \infty}\frac{\dim\eta_k(p)}{\dim S_k(\Gamma_0(p))+\dim S_k\left(p,\left(\frac{\cdot}{p}\right)\right)}=\frac{h}{p-1}.$$
\end{enumerate}
\end{coro}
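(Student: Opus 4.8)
The plan is to reduce all three statements to a single asymptotic comparison. By Theorem \ref{plinind} the $\eta$-quotients furnished by Theorems \ref{kcond} and \ref{etaPcount} together with the at most two non-cuspidal $\eta$-quotients are linearly independent, so $\dim\eta_k(p)$ is exactly their number. First I would record its size as $k\to\infty$: writing $k=hk'$ and $p-1=2hd$ as in Theorem \ref{etaPcount} we have $d=\frac{p-1}{2h}$, and each of the three cuspidal counts $\frac{k(p+1)}{12d}-1$, $\ceiling{\frac{k(p+1)}{12d}}$, $\floor{\frac{k(p+1)}{12d}}$ equals $\frac{k(p+1)}{12d}+O(1)=\frac{hk(p+1)}{6(p-1)}+O(1)$ with an error bounded independently of $k$; adding the at most two non-cuspidal $\eta$-quotients changes this by $O(1)$. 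Hence $\dim\eta_k(p)=\frac{hk(p+1)}{6(p-1)}+O(1)$.

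Next I would extract the leading terms of the relevant dimension formulas. From Section \ref{Gamma0}, for even $k\ge4$ the expression $(k-1)(g_0(p)-1)+(k-2)+\mu_{0,2}(p)\floor{k/4}+\mu_{0,3}(p)\floor{k/3}$ has coefficient of $k$ equal to $g_0(p)+\frac{\mu_{0,2}(p)}{4}+\frac{\mu_{0,3}(p)}{3}=\frac{p+1}{12}$, so $\dim S_k(\Gamma_0(p))=\frac{(p+1)k}{12}+O(1)$. Likewise every nonzero entry in the tables of the quadratic-character section has the shape $\frac{(k-1)(p+1)+c}{12}$ for a constant $c$ depending only on the residues of $k$ and $p$, so $\dim S_k\!\left(p,\left(\frac{\cdot}{p}\right)\right)=\frac{(p+1)k}{12}+O(1)$ along any progression of $k$ on which it is nonzero.

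Finally I would run the case analysis. By Theorem \ref{GHNew} a level-$p$ $\eta$-quotient carries character $\chi(n)=\left(\frac{(-1)^k p^{r_p}}{n}\right)$, which is either trivial or the quadratic character $\left(\frac{\cdot}{p}\right)$; thus $\eta_k(p)$ splits into a piece lying in $M_k(\Gamma_0(p))$ and a piece lying in $M_k\!\left(p,\left(\frac{\cdot}{p}\right)\right)$. For $p\equiv3\pmod4$ and $k$ odd in the progression of Theorem \ref{kcond}, the trivial character is even so $M_k(\Gamma_0(p))=0$ and all $\eta$-quotients lie in $M_k\!\left(p,\left(\frac{\cdot}{p}\right)\right)$; dividing by $\dim S_k\!\left(p,\left(\frac{\cdot}{p}\right)\right)$ and letting $k\to\infty$ gives $\frac{h(p+1)/[6(p-1)]}{(p+1)/12}=\frac{2h}{p-1}$, which is (1). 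For $p\equiv3\pmod4$ and $k$ even, $\left(\frac{\cdot}{p}\right)$ is odd so $S_k\!\left(p,\left(\frac{\cdot}{p}\right)\right)=0$, all $\eta$-quotients lie in $M_k(\Gamma_0(p))$, and dividing by $\dim S_k(\Gamma_0(p))$ gives (2). For $p\equiv1\pmod4$ the admissible $k$ from Theorem \ref{kcond} are all even, both eigenspaces occur, and dividing $\dim\eta_k(p)$ by $\dim S_k(\Gamma_0(p))+\dim S_k\!\left(p,\left(\frac{\cdot}{p}\right)\right)=\frac{(p+1)k}{6}+O(1)$ gives $\frac{h(p+1)/[6(p-1)]}{(p+1)/6}=\frac{h}{p-1}$, which is (3).

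The algebra of the first two steps is routine; the part requiring care is the bookkeeping in the last step, namely confirming that every $\eta$-quotient of weight $k$ and level $p$ really has trivial or $\left(\frac{\cdot}{p}\right)$ character (so no Eisenstein series of other characters inflate the count) and that the parity of $k$ forces exactly one of the two character eigenspaces to vanish when $p\equiv3\pmod4$. Once that is in place the limits are immediate, since every error term is $O(1)$ while numerator and denominator each grow linearly in $k$.
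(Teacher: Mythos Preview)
Your proposal is correct and follows exactly the approach indicated in the paper, which simply says the corollary is obtained by comparing the dimension formulas with the $\eta$-quotient counts and applying Theorem \ref{plinind}. You have supplied the details the paper omits---the explicit leading-term extraction from Theorem \ref{etaPcount} and the dimension tables, together with the parity/character bookkeeping that pins down which eigenspace the $\eta$-quotients inhabit in each case---so your write-up is a fleshed-out version of the paper's one-line justification rather than a different argument.
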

Finally, we would like to consider the case that our $v_1$ and $v_p$ are integral but do not correspond to integral $r_1,r_p$. To gain some intuition concerning the properties of the ``$\eta$-quotients'' formed from these $r_1,r_p$ we consider the following example.
\begin{example}
Let $p=11$ and $k=6$. Note, in this situation we have that in order to have $\eta$-quotients we must have $v_1\equiv 3\pmod{5}$. So, we will investigate the properties of the function obtained by choosing $v_1\not\equiv 3\pmod{5}$.

Consider $v_1=1$. This implies $v_p=5$.
Then,
$$\begin{pmatrix}r_1\\r_p\end{pmatrix}=\begin{pmatrix}1&11\\11&1\end{pmatrix}^{-1}\begin{pmatrix}24\\120\end{pmatrix}=\begin{pmatrix}54/5\\6/5\end{pmatrix}.$$
Now, we can use these to form the ``$\eta$-quotient''
$$f(z)=\eta^{\frac{54}{5}}(z)\eta^{\frac{6}{5}}(11z).$$
Using the transformation properties from Section \ref{EtaQuotient} we have that
\begin{align*}
f(Tz)&=e^{\frac{27\pi i}{30}}\eta^{\frac{54}{5}}(z)e^{\frac{11\pi i }{10}}\eta^{\frac{6}{5}}(11z)\\
&=f(z).
\end{align*}
Note, if we raise $f(z)$ to the $5^{th}$ power to cancel the denominators of the $r_1$ and $r_p$ then we can use Theorem \ref{GHNew} to verify that we obtain $f(z)^5\in S_30(\Gamma_0(11))$, i.e., our lattice point corresponds to a ``root'' of an $\eta$ quotient of higher weight.

Note, the remaining choices for $v_1$ give us similar results.
\end{example}

\subsection{$N=pq$}
For this section, we shall assume that the level of the modular forms being discussed is always the product of two distinct primes greater than 3.

To start we wish to place conditions on $k$ to guarantee a weakly holomorphic modular form.
\begin{theorem}
For $N=pq$, if $k$ is a positive integer satisfying \[\gcd\left(\frac{p-1}{2},\frac{q-1}{2},\frac{pq-1}{2},12\right)|k,\]
then $M^{!}_k(\Gamma_1(N))$ contains $\eta$-quotients.
\end{theorem}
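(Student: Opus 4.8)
The plan is to reduce to the prime case treated in Theorem \ref{kcond} by manufacturing ``building-block'' $\eta$-quotients at level $pq$ and multiplying them. Write a level-$pq$ $\eta$-quotient as $f(z)=\prod_{\delta\in\{1,p,q,pq\}}\eta(\delta z)^{r_\delta}$. By Theorem \ref{GHNew}, $f$ lies in $M^{!}_k(\Gamma_1(pq))$ (indeed, with an associated character $\chi$ as in that theorem, so that $f|_k\gamma=f$ on $\Gamma_1(pq)$) as soon as $k=\tfrac12\sum_\delta r_\delta\in\Z$, $\sum_\delta\delta r_\delta\equiv 0\pmod{24}$, and $\sum_\delta\tfrac{pq}{\delta}r_\delta\equiv 0\pmod{24}$; holomorphy on $\Hc$ and meromorphy at the cusps of $\Gamma_0(pq)$ are automatic for any $\eta$-quotient. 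The crucial observation is that these three integrality/congruence conditions are $\Z$-linear in the exponent vector $(r_\delta)$, and $k$ is $\Z$-linear in it as well. Hence if $f_1,f_2$ are level-$pq$ $\eta$-quotients each satisfying the hypotheses of Theorem \ref{GHNew}, then so is $f_1^{a}f_2^{b}$ for every $a,b\in\Z$, and $f_1^{a}f_2^{b}\in M^{!}_{aw_1+bw_2}(\Gamma_1(pq))$, where $w_i$ is the weight of $f_i$.

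Next I would build the two blocks. For the prime $p$, let $h_p$ be the divisor of the weight prescribed by Theorem \ref{kcond}; inspecting the six cases there shows $h_p=\gcd(\tfrac{p-1}{2},12)$. Taking $k=h_p$, which satisfies the congruence required in Theorem \ref{kcond}, that theorem (and its proof) supplies a weakly holomorphic $\eta$-quotient $E_p=\eta(z)^{a_1}\eta(pz)^{a_p}$ of level $p$ and weight $h_p$, so that $a_1+pa_p\equiv 0\pmod{24}$ and $pa_1+a_p\equiv 0\pmod{24}$. Now regard $E_p$ as a level-$pq$ $\eta$-quotient, with exponent $0$ on $\eta(qz)$ and on $\eta(pqz)$. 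Its two sums become $\sum_\delta\delta r_\delta=a_1+pa_p$ and $\sum_\delta\tfrac{pq}{\delta}r_\delta=pq\,a_1+q\,a_p=q(pa_1+a_p)$, both divisible by $24$, so $E_p\in M^{!}_{h_p}(\Gamma_1(pq))$. By symmetry, $E_q\in M^{!}_{h_q}(\Gamma_1(pq))$ with $h_q=\gcd(\tfrac{q-1}{2},12)$.

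To finish, set $g=\gcd(h_p,h_q)$. By B\'ezout's identity $\{ah_p+bh_q : a,b\in\Z\}=g\Z$, so for every $k$ with $g\mid k$ we may write $k=ah_p+bh_q$, and then $E_p^{a}E_q^{b}$ is an $\eta$-quotient lying in $M^{!}_k(\Gamma_1(pq))$. It remains to recognize $g$: by associativity of $\gcd$, $g=\gcd(\tfrac{p-1}{2},\tfrac{q-1}{2},12)$, and since $\tfrac{pq-1}{2}=q\cdot\tfrac{p-1}{2}+\tfrac{q-1}{2}$ is a $\Z$-linear combination of $\tfrac{p-1}{2}$ and $\tfrac{q-1}{2}$, adjoining it to the list leaves the gcd unchanged: $g=\gcd(\tfrac{p-1}{2},\tfrac{q-1}{2},\tfrac{pq-1}{2},12)$. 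This is exactly the divisibility hypothesis of the theorem, so we are done.

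The one place to be careful is the bookkeeping: confirming $h_p=\gcd(\tfrac{p-1}{2},12)$ from the six cases of Theorem \ref{kcond}, and checking the $\sum_\delta\tfrac{pq}{\delta}r_\delta$ congruence for $E_p$ viewed at level $pq$, which survives only because that sum picks up a factor of $q$. Everything else is linear algebra over $\Z$ together with the elementary gcd identity above; in particular, since we need membership in $M^{!}_k$ and not in $M_k$, no order-of-vanishing analysis via Theorem \ref{Cvanishing} is required.
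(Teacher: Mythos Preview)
Your proof is correct, but it takes a genuinely different route from the paper's. The paper does not prove this statement directly; it defers to the general square-free result (Theorem~\ref{sqfkcond}), whose argument runs as follows: set $d=\gcd\bigl(pq-1,\,q-1,\,p-1,\,24\bigr)$, use B\'ezout to write $d=24x+(pq-1)y_1+(q-1)y_p+(p-1)y_q$, scale by $W=-2k/d$, and read off the exponents $r_\delta=y_\delta W$ (with $r_{pq}=2k-\sum_{\delta\neq pq}r_\delta$) so that both congruences in Theorem~\ref{GHNew} are satisfied at once. Your argument instead bootstraps from the prime case: you lift the weight-$h_p$ and weight-$h_q$ $\eta$-quotients supplied by Theorem~\ref{kcond} to level $pq$ (the key check being that $\sum_\delta\tfrac{pq}{\delta}r_\delta$ picks up a harmless factor of $q$ or $p$), and then apply B\'ezout to the \emph{weights} $h_p,h_q$ rather than to the numbers $N/\delta-1$. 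Both arguments ultimately rest on B\'ezout, but on different integer data. The paper's approach is self-contained and immediately yields the square-free $N$ case uniformly; yours is more modular, reusing the prime-level classification, and makes the identity $h_p=\gcd(\tfrac{p-1}{2},12)$ explicit (a nice observation not stated in the paper). Your final gcd identification via $\tfrac{pq-1}{2}=q\cdot\tfrac{p-1}{2}+\tfrac{q-1}{2}$ is clean and correct.
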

This is a specialization of Theorem \ref{sqfkcond}, and the proof can be found with the theorem. We note that the sum of all the orders of vanishing is \[v_1+v_p+v_q+v_{pq} = \frac{k(pq+p+q+1)}{12}.\] 
Thus, in order to have modular forms our orders of vanishing must satisfy $0\leq v_\delta \leq \frac{k(pq+p+q+1)}{12}$, for all $\delta|N$. Furthermore, we have that each lattice point on the hyperplane given by the previous summation corresponds to an $\eta$-quotient which may have non-integral exponents. For a fixed set of vanishing orders we can calculate the $\eta$-quotient by computing \[\begin{bmatrix}r_1\\r_p\\r_q\\2k\end{bmatrix} = \begin{bmatrix}
pq-1 & q-1 & p-1 & 1\\
q-p & pq-p & 1-p & p\\
p-1 & 1-q & pq-q & q\\
1-pq & p-pq & q-pq & pq
\end{bmatrix}^{-1}\begin{bmatrix}24v_1\\24v_p\\24v_q\\24v_{pq}\end{bmatrix}.\] Thus the number of lattice points on the hyperplane provides an upper bound for the number of $\eta$-quotients. However, we need to guarantee that $r_\delta\in\mathbb{Z}$ for each $\delta|N$. We can write the following equations 
\begin{align}
24v_1 &= (pq-1)r_1 + (q-1)r_p + (p-1)r_q + 2k\label{pqv1}\\
24v_p &= (q-p)r_1 + (pq-p)r_p + (1-p)r_q + 2kp\label{pqvp}\\
24v_q &= (p-q)r_1 + (1-q)r_p + (pq-q)r_q + 2kq\label{pqvq}\\
24v_{pq} &= (1-pq)r_1 + (p-pq)r_p + (q-pq)r_q + 2kpq\label{pqvpq}
\end{align}
We get the following congruence by creating linear combinations of the equations above. From  (\ref{pqv1}) and (\ref{pqvp}), we have $24(v_1+v_p) \equiv 0 \pmod{p+1}$. From  (\ref{pqv1}) and (\ref{pqvq}), we have $24(v_1+v_q) \equiv 0 \pmod{q+1}$. From  (\ref{pqvp}) and (\ref{pqvpq}), we have $24(v_p+v_{pq}) \equiv 0 \pmod{q+1}$. From  (\ref{pqvq}) and (\ref{pqvpq}), we have $24(v_1+v_{pq}) \equiv 0 \pmod{p+1}$. From (\ref{pqv1}) and (\ref{pqvpq}), we have $24(v_1+v_{pq}) \equiv 2k(1+pq)\pmod{(p-1)(q-1)}$. From (\ref{pqvp}) and (\ref{pqvq}), we have $24(v_p+v_q) \equiv 2k(p+q)\pmod{(p-1)(q-1)}$. While satisfying these congruences still does not guarantee a modular form, the number of lattice points which do satisfy the congruences provide a better upper bound on the number of $\eta$-quotients than the previous upper bound.

\subsection{Square-Free $N$}
For this section, we will assume that $N$ is odd and square-free. We begin by finding conditions on $k$ that are similar to Theorem \ref{kcond}
\begin{theorem}\label{sqfkcond}
Fix a level $N=p_1\cdots p_s$ and a weight $k$. If $k$ satisfies \[ \left. \gcd \left( \gcd_{\substack{\delta |N\\ \delta \neq N}}\left(\frac{N}{d} - 1\right),24\right) \right| 2k\] then $M_k^{!}(\Gamma_1(N))$ contains $\eta$-quotients.
\end{theorem}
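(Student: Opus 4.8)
The plan is to reduce the statement to a purely Diophantine problem and then assemble the required $\eta$-quotient from a short list of elementary blocks. An $\eta$-quotient $\prod_{\delta\mid N}\eta(\delta z)^{r_\delta}$ is automatically holomorphic and nonvanishing on $\Hc$ and of finite order at every cusp, so by Theorem \ref{GHNew} it belongs to $M^{!}_k(\Gamma_1(N))$ as soon as its exponents satisfy
$$\sum_{\delta\mid N} r_\delta = 2k,\qquad \sum_{\delta\mid N}\delta\, r_\delta\equiv 0\!\!\pmod{24},\qquad \sum_{\delta\mid N}\tfrac{N}{\delta}\, r_\delta\equiv 0\!\!\pmod{24}.$$
So the theorem becomes the assertion that, if $g:=\gcd\!\bigl(\gcd_{\delta\mid N,\,\delta\neq N}(\tfrac{N}{\delta}-1),\,24\bigr)$ divides $2k$, this system has an integer solution. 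Since $N$ is odd, every $\tfrac{N}{\delta}-1$ is even, so $g$ is an even positive divisor of $24$; in particular $g/2$ is a positive integer.

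The key reduction is that it suffices to produce a single $\eta$-quotient $f$ whose total exponent $\sum r_\delta$ equals $g$ and which satisfies the two congruences. Indeed, the set of values of $\sum r_\delta$ attainable subject to those two congruences is a subgroup $m\Z\le\Z$, so exhibiting such an $f$ shows $m\mid g$, whence $g\mid 2k$ forces $2k\in m\Z$. Concretely, $f^{2k/g}$ is again an $\eta$-quotient, its exponents still satisfy the congruences, and its total exponent is $2k$, so it is the desired element of $M^{!}_k(\Gamma_1(N))$.

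To build $f$, write $g$ by B\'ezout as $g=24a_0+\sum_{e\mid N,\,e>1} a_e(e-1)$ with $a_0,a_e\in\Z$. Handle the $24a_0$ term with $\eta(z)^{24a_0}$, for which both congruence-sums are plainly multiples of $24$. For each nontrivial divisor $e\mid N$ introduce the block
$$B_e(z)=\eta^{e}\!\bigl(\tfrac{N}{e}z\bigr)\,\eta^{-1}(Nz),$$
involving only the divisors $\tfrac{N}{e}$ and $N$ and having total exponent $e-1$; here $\sum\delta r_\delta=\tfrac{N}{e}\cdot e-N=0$, while $\sum\tfrac{N}{\delta}r_\delta=e^{2}-1$, which is divisible by $8$ because $e$ is odd and by $3$ because $3\nmid e$ (as $e\mid N$ and, as will be needed, $\gcd(N,6)=1$), hence by $24$. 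Then $f(z)=\eta(z)^{24a_0}\prod_{e\mid N,\,e>1}B_e(z)^{a_e}$ has total exponent $g$ and both congruence-sums $\equiv 0\pmod{24}$, and $f^{2k/g}$ is the $\eta$-quotient we want. As a check, for $N=p$ this recovers Theorem \ref{kcond}: one gets $B_p(z)=\eta^{p}(z)\eta^{-1}(pz)$, $g=\gcd(p-1,24)$, and the divisibility $g\mid 2k$ is precisely the congruence on $k$ recorded there.

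The step I expect to be the real obstacle concerns the prime $3$: the block $B_e$ pins down the second congruence-sum modulo $3$ only when $3\nmid e$, so the construction above genuinely needs $\gcd(N,6)=1$---in line with the ``primes greater than $3$'' hypothesis in the $N=pq$ case, and with the fact that the level $N=3$ really does require the extra condition $3\mid k$ (compare the case $p=3$ in the proof of Theorem \ref{kcond}). To allow $3\mid N$ with $N$ having at least two prime factors, one would replace each $B_e$ having $3\mid e$ by a heavier block that also uses a divisor strictly between $1$ and $N$, and check that both congruences remain satisfiable. A final, routine point is to verify that the Kronecker character $\bigl(\tfrac{(-1)^k s}{\cdot}\bigr)$ attached by Theorem \ref{GHNew} has conductor dividing $N$, so that $f^{2k/g}$ lies in $M^{!}_k(\Gamma_1(N))$ and not merely in a modular-function space of larger level.
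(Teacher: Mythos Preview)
Your argument is correct and, at its core, is the same B\'ezout manoeuvre the paper uses. In the paper one writes $g=24x+\sum_{\delta\mid N,\,\delta\neq N}(\tfrac{N}{\delta}-1)y_\delta$, scales by the integer $W=-2k/g$, sets the final exponent $r_N$ so that $\sum_\delta r_\delta=2k$, and reads off $\sum_\delta\tfrac{N}{\delta}r_\delta\equiv 0\pmod{24}$ directly from the identity; the companion congruence $\sum_\delta \delta r_\delta\equiv 0\pmod{24}$ is then obtained in one line by multiplying through by $N$ and using $(N/\delta)^2\equiv 1\pmod{24}$, which needs $\gcd(N,6)=1$. Your block decomposition $B_e=\eta^{e}(\tfrac{N}{e}z)\,\eta^{-1}(Nz)$ is really the same B\'ezout solution made explicit: each $B_e$ contributes $e-1$ to the total exponent, contributes $0$ to $\sum\delta r_\delta$, and contributes $e^2-1$ to $\sum\tfrac{N}{\delta}r_\delta$; the condition $24\mid e^2-1$ is exactly the $\gcd(N,6)=1$ restriction again, so the two proofs have identical scope. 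What your packaging buys is a concrete formula for the $\eta$-quotient and a transparent reason why the obstruction at the prime $3$ appears; what the paper's ``multiply by $N$'' buys is brevity.

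Your closing worry about the conductor of $\chi$ is not an obstacle. Theorem~\ref{GHNew} already asserts $f|_k\gamma=\chi(d)f$ for \emph{every} $\gamma\in\Gamma_0(N)$; since $\Gamma_1(N)\trianglelefteq\Gamma_0(N)$ with quotient $(\Z/N\Z)^{\times}$ via $\gamma\mapsto d\pmod N$, this transformation law itself forces $\chi$ to descend to a character modulo $N$, and in particular $\chi(d)=1$ whenever $d\equiv 1\pmod N$. So $f^{2k/g}\in M_k^{!}(\Gamma_1(N))$ without any separate conductor check.
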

\begin{proof}
($\rightarrow$) Suppose that $f$ is an $\eta$-quotient satisfying Theorem \ref{GHNew}. Then, we can write that $2k = \sum_{\delta |N} r_\delta$; which can be written as \[r_N = 2k - \sum_{\substack{\delta |N\\ \delta\neq N}} r_\delta.\] Furthermore, we have that \[\sum_{\delta |N} \frac{N}{\delta} r_\delta = -24 M,\] where $r_\delta, M\in\Z$. We can rewrite this last equation as \[ 24M + \sum_{\substack{\delta |N\\ \delta\neq N}} \left(\frac{N}{\delta} - 1\right) r_\delta = -2k.\] Since $M,r_\delta \in\Z$, we have that $\gcd \left( \gcd_{\substack{\delta |N\\ \delta \neq N}}\left(\frac{N}{d} - 1\right),24\right) | 2k$.\\
($\leftarrow$) Let $f(z) = \prod_{\delta |N}\etaq{\delta}{r_\delta}$. Also, let $d=\gcd \left( \gcd_{\substack{\delta |N\\ \delta \neq N}}\left(\frac{N}{d} - 1\right),24\right)$. Since $d|2k$, let $-2k=dW$. Let $x,y_\delta\in \Z$. We can write the following \begin{align*}
d &= 24x + \sum_{\delta |N} \left(\frac{N}{\delta}-1\right)r_\delta\\
-2k &= 24xW + \sum_{\delta |N} \left(\frac{N}{\delta}-1\right)r_\delta W\\
24xW &= 2k + \sum_{\delta |N} \left(\frac{N}{\delta}-1\right)r_\delta W\\
\sum_{\delta |N} \frac{N}{\delta}r_\delta W &\equiv 0 \pmod{24}
\end{align*} This gives us our first congruence. If we multiply our sum by $N$, we get $\sum_{\delta |N} \delta r_\delta W \equiv 0 \pmod{24}$, Giving us our second congruence condition. Therefore, $f(z)$ is a weakly holomorphic modular form.
\end{proof}

Similar to the $N=pq$ case, we can sum together all of the orders of vanishing to get \begin{align*}
\sum_{d |N} v_d &= \frac{1}{24}\sum_{d,\delta |N} \frac{N(d,\delta)^2r_\delta}{(d\delta)}\\
&= \frac{1}{24}\sum_{\delta |N} \sigma_1(N)r_\delta\\
&= \frac{k\sigma_1(N)}{12}
\end{align*} This gives us a hyperplane in $\R^{\sigma_0(N)}$. The number of lattice points in the hyperplane whose entries are all nonnegative provides an upper bound on the number of $\eta$-quotients.

\subsection{$N=4p$}
In this section, we shall asssume that the level we are looking at is $4p$ for a prime $p>3$.
\begin{prop}\label{p2kprop}
For all $k\geq 0$, \[ \frac{\eta^{4k}(8z)}{\eta^{2k}(4z)} \in M_k(8,\chi) \] for $\chi(n) = \left(\frac{(-1)^k}{n}\right)$.
\end{prop}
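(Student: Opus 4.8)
The plan is to exhibit $f(z)=\eta^{4k}(8z)/\eta^{2k}(4z)$ as an $\eta$-quotient of level $N=8$ and to apply Theorems \ref{GHNew} and \ref{Cvanishing} directly. Writing $f(z)=\prod_{\delta\mid 8}\eta^{r_\delta}(\delta z)$, we have $r_1=r_2=0$, $r_4=-2k$, and $r_8=4k$, so that $\tfrac12\sum_{\delta\mid 8}r_\delta=\tfrac12(4k-2k)=k$, which is the claimed weight.

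First I would verify the two divisibility hypotheses of Theorem \ref{GHNew}. One computes $\sum_{\delta\mid 8}\delta r_\delta = 4(-2k)+8(4k)=24k\equiv 0\pmod{24}$ and $\sum_{\delta\mid 8}\tfrac{8}{\delta}r_\delta = 2(-2k)+1(4k)=0\equiv 0\pmod{24}$, so Theorem \ref{GHNew} applies and gives $f|_k\gamma=\chi(d)f$ for all $\gamma=\begin{pmatrix}a&b\\c&d\end{pmatrix}\in\Gamma_0(8)$, with $\chi(n)=\left(\frac{(-1)^k s}{n}\right)$ and $s=\prod_{\delta\mid 8}\delta^{r_\delta}=4^{-2k}8^{4k}=2^{8k}$. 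Since $2^{8k}=(2^{4k})^2$ is a perfect square, the factor $\left(\frac{s}{n}\right)$ is trivial on all $n$ coprime to $s$, hence $\chi(n)=\left(\frac{(-1)^k}{n}\right)$, exactly the character in the statement. Restricting the transformation law to $\Gamma_1(8)$ (where $d\equiv 1$) shows $f$ is modular of weight $k$ and level $\Gamma_1(8)$, while the transformation under all of $\Gamma_0(8)$ is precisely the defining property of $M_k(8,\chi)$.

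Next I would check holomorphy at the cusps. The cusps of $\Gamma_0(8)$ are represented by $c/d$ with $d\in\{1,2,4,8\}$ and $(c,d)=1$ (there are four of them, one for each divisor of $8$), and the value in Theorem \ref{Cvanishing} is independent of $c$, so it suffices to evaluate $v_d=\frac{8}{24}\sum_{\delta\mid 8}\frac{(d,\delta)^2 r_\delta}{(d,8/d)\,d\,\delta}$ for these four $d$. A short calculation gives $v_1=v_2=v_4=0$ and $v_8=k$, all nonnegative, so $f$ is holomorphic at every cusp (vanishing only at $1/8$, and only when $k>0$). Combined with the fact that $\eta(z)$ is holomorphic and nowhere zero on $\Hc$, so that $f$ is holomorphic on $\Hc$, this yields $f\in M_k(8,\chi)$.

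I do not anticipate a serious obstacle: the argument is a direct application of the two quoted theorems. The only points requiring a little care are (i) confirming that the square factor $s=2^{8k}$ drops out of the Kronecker symbol, so that the character is exactly $\left(\frac{(-1)^k}{n}\right)$ rather than a nontrivial twist of it, and (ii) checking that the four divisors of $8$ genuinely exhaust the cusps of $\Gamma_0(8)$, so that the order-of-vanishing verification is complete. The degenerate case $k=0$, where $f\equiv 1\in M_0(8,\chi_0)$, is immediate.
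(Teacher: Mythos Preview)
Your proof is correct and follows essentially the same route as the paper: identify $r_4=-2k$, $r_8=4k$, verify the weight and the two congruences of Theorem~\ref{GHNew}, and then check via Theorem~\ref{Cvanishing} that the orders of vanishing at the four cusps $d=1,2,4,8$ are $0,0,0,k\ge 0$. You even add the explicit verification that $s=2^{8k}$ is a perfect square so the character reduces to $\left(\frac{(-1)^k}{n}\right)$, a detail the paper's proof leaves implicit.
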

\begin{proof}
First, we have $4k+(-2)k = 2k$, as desired. We also have $(-2) 4k + 8 (4k) = 24k \equiv 0 \pmod{24}$, as desired; and $2(-2k) + 4k = 0 \equiv 0 \pmod{24}$, as desired. Finally, \begin{align*}
\frac{1}{4}(-2k) + \frac{1}{8} (4k) &= 0\\
(-2k) + \frac{1}{2} (4k) &= 0\\
4(-2k) + 2(4k) &= 0\\
4(-2k) + 8(4k) &= 24k \geq 0\\
\end{align*}
\end{proof}

\begin{prop}\label{pekprop}
Let $p$ be a prime. Also, let $k$ be an even positive integer. Then, \[ \frac{\eta^{4k}(4pz)}{\eta^{2k}(4z)}  \in M_k (\Gamma_0(4p)).\]
\end{prop}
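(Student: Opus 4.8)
The plan is to verify directly that the $\eta$-quotient $f(z) = \eta^{4k}(4pz)/\eta^{2k}(4z)$ satisfies the hypotheses of Theorem \ref{GHNew} with $N=4p$, and then check the order-of-vanishing conditions from Theorem \ref{Cvanishing} to upgrade ``weakly modular'' to ``holomorphic modular form.'' Here the exponents are $r_4 = -2k$, $r_{4p} = 4k$, and $r_\delta = 0$ for the remaining divisors $\delta \in \{1,2,p,2p\}$ of $N=4p$. The weight is $\frac{1}{2}(r_4 + r_{4p}) = \frac{1}{2}(2k) = k$, as required, which is an integer since $k$ is an integer.

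First I would check the two congruence conditions of Theorem \ref{GHNew}. The condition $\sum_{\delta\mid N}\delta r_\delta \equiv 0 \pmod{24}$ becomes $4(-2k) + 4p(4k) = 8k(2p-1) \equiv 0 \pmod{24}$, i.e.\ $k(2p-1)\equiv 0 \pmod 3$; since $p>3$ is prime, $p\not\equiv 0\pmod 3$, and one checks that $2p-1\equiv 0\pmod 3$ exactly when $p\equiv 2\pmod 3$, so in the remaining case $p\equiv 1\pmod 3$ we need $3\mid k$ — this is where the hypothesis ``$k$ even'' alone looks insufficient, so I expect the actual argument to need $k$ divisible by something finer, or to restrict $p$; I would re-examine this carefully (it is plausible the intended hypothesis forces $6\mid k$ or the statement implicitly uses $p\equiv 2\pmod 3$). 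The second condition $\sum_{\delta\mid N}\frac{N}{\delta} r_\delta \equiv 0\pmod{24}$ becomes $\frac{4p}{4}(-2k) + \frac{4p}{4p}(4k) = -2pk + 4k = 2k(2-p)\equiv 0\pmod{24}$, i.e.\ $k(p-2)\equiv 0 \pmod{12}$; with $k$ even this reads $\frac{k}{2}(p-2)\cdot 2$, and one analyzes divisibility by $3$ and $4$ by cases on $p\bmod{12}$.

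Next I would compute the orders of vanishing $v_d$ at each cusp $c/d$ with $d\mid 4p$, using the formula $v_d = \frac{N}{24}\sum_{\delta\mid N}\frac{(d,\delta)^2 r_\delta}{(d,N/d)\, d\, \delta}$ from Theorem \ref{Cvanishing}, for $d \in \{1,2,4,p,2p,4p\}$. Since only $r_4$ and $r_{4p}$ are nonzero, each $v_d$ is a short expression in $(d,4)$ and $(d,4p)$; the task is to show every $v_d \ge 0$. The terms with $r_{4p}=4k>0$ contribute positively, and one must check that the negative contribution from $r_4 = -2k$ never overwhelms them — the tightest cusp should be $d=4$ (where $(d,4)=4$ is maximal relative to $(d,4p)$). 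I would tabulate all six values and confirm nonnegativity, again tracking whatever parity/divisibility of $k$ is genuinely needed.

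Finally, Theorem \ref{GHNew} gives $f|_k\gamma = \chi(d) f$ for all $\gamma\in\Gamma_0(4p)$ with $\chi(n) = \left(\frac{(-1)^k s}{n}\right)$, $s = 4^{-2k}(4p)^{4k}$; since $k$ is even, $(-1)^k = 1$ and $s$ is a perfect square, so $\chi$ is trivial and $f|_k\gamma = f$ for all $\gamma\in\Gamma_0(4p)$, placing $f$ in $M_k(\Gamma_0(4p))$ rather than merely $M_k(4p,\chi)$ for a nontrivial $\chi$. The main obstacle, as flagged above, is the congruence bookkeeping: confirming that ``$k$ even'' (possibly together with $p>3$, or a case split on $p \bmod{12}$) really does suffice for both mod-$24$ conditions; if it does not, the honest proof will have to either strengthen the divisibility hypothesis on $k$ or split into cases according to the residue of $p$ modulo $12$, exactly as in the proof of Theorem \ref{kcond}.
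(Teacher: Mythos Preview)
Your plan is exactly the paper's: check the two congruences of Theorem~\ref{GHNew} and then the six cusp orders from Theorem~\ref{Cvanishing}. The discrepancy you flag in the first congruence is real and is not a bookkeeping slip on your part. With $r_4=-2k$, $r_{4p}=4k$ one gets $\sum_{\delta}\delta r_\delta = 8k(2p-1)$, and for $p\equiv 1\pmod 3$ this is not divisible by $24$ unless $3\mid k$; the second congruence likewise fails for suitable $p$ with $k$ merely even. So the proposition as printed is false in general (e.g.\ $p=7$, $k=2$: the $q$-expansion of $\eta^{8}(28z)/\eta^{4}(4z)$ begins at $q^{k(2p-1)/3}=q^{26/3}$, so the function is not even $T$-invariant).

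The resolution is that the paper's own sketch is not computing with $r_4=-2k$ at all. Its first congruence reads $2p(-2k)+4p(4k)=12pk$ and its second $2(-2k)+4k=0$, and in every one of the six vanishing-order lines the coefficient multiplying $(-2k)$ is $(d,2p)^2/(2p)$ rather than $(d,4)^2/4$. These are precisely the computations for $r_{2p}=-2k$, $r_{4p}=4k$, i.e.\ for the quotient $\eta^{4k}(4pz)/\eta^{2k}(2pz)$. With that correction, $12pk\equiv 0\pmod{24}$ holds exactly because $k$ is even, the second sum is identically $0$, all six $v_d$ are nonnegative as listed, and (since $-2k$ and $4k$ are even) the character is trivial, placing the form in $M_k(\Gamma_0(4p))$. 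Note too that this version specializes at $p=2$ to Proposition~\ref{p2kprop}, as the paper claims. So your outline is correct and the ``main obstacle'' you anticipated disappears once the typo in the displayed $\eta$-quotient is fixed; no case split on $p\bmod 12$ is needed.
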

\begin{proof}
Note that the $p=2$ case is shown in Proposition \ref{p2kprop}. Therefore, assume $p\geq 3$. (Sketch) $4k+(-2)k = 2k$ as desired. $2p(-2k) + 4p(4k) = 12pk \equiv 0 \pmod{24}$, since $k$ is even, as desired. $2(-2k) + 4k = 0 \equiv 0 \pmod{24}$ as desired. Finally, \begin{align*}
\frac{1}{2p}(-2k) + \frac{1}{4p} (4k) &= 0\\
\frac{2}{p}(-2k) + \frac{1}{p} (4k) &= 0\\
\frac{2}{p}(-2k) + \frac{4}{p}(4k) &= \frac{12}{p}k \geq 0\\
\frac{p}{2}(-2k) + \frac{p}{4}(4k) &= 0\\
2p(-2k) + p(4k) &= 0\\
2p(-2k) + 4p(4k) &= 12pk \geq 0\\
\end{align*}
\end{proof}

\section{Future Work}
As previously mentioned, all of the congruence conditions for $N=pq$ and square-free $N$ do not provide sufficient conditions to give an $\eta$-quotient. In the future, it would be nice to figure out the final conditions needed to yield an $\eta$-quotient. It would also be important to study the case $N=p^2$ and extend the cases to $p^n$. Assuming that there are results here, the results could be combined to look at the general $N$ case.

We can also expand Theorem \ref{plinind} to the following conjecture:
\begin{conjecture}
All $\eta$-quotients gained from Theorems \ref{GHNew} and \ref{Cvanishing} are linearly independent.
\end{conjecture}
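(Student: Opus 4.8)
The plan is to lift the echelon-form argument of Theorem~\ref{plinind} from a single cusp to all cusps of $\Gamma_0(N)$ simultaneously. First one reduces to a fixed weight and level: $\eta$-quotients of different weights are automatically independent, and within a fixed weight $k$ one may work inside a single space $M_k(\Gamma_1(N))$ (indeed inside a single $M_k(N,\chi)$, using the decomposition of Section~\ref{ModForm}, since $\eta$-quotients carry well-defined characters by Theorem~\ref{GHNew}). So suppose $f_1,\dots,f_n$ are distinct $\eta$-quotients of level $N$ and weight $k$ satisfying Theorems~\ref{GHNew} and~\ref{Cvanishing}, and that $\sum_i c_i f_i = 0$.

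The key preliminary observation is that distinct $\eta$-quotients have distinct divisors. If $f=\prod_{\delta\mid N}\etaq{\delta}{r_\delta}$ then, collecting the orders $v_d$ of Theorem~\ref{Cvanishing} into a vector indexed by the divisors $d\mid N$, we have $(v_d)_d=\tfrac1{24}B_N(r_\delta)_\delta$ with $(B_N)_{d,\delta}=\tfrac{N(d,\delta)^2}{(d,N/d)\,d\,\delta}$. Since $(d,\delta)$ is multiplicative in each variable, for squarefree $N$ the matrix $B_N$ is the Kronecker product $\bigotimes_{p\mid N}\left(\begin{smallmatrix}p&1\\1&p\end{smallmatrix}\right)$, which is invertible because each factor has determinant $p^2-1\neq0$; this generalizes the inversion of the matrix in Equation~\ref{vanord}. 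For arbitrary $N$ one checks invertibility of $B_N$ directly. Hence the exponent-to-divisor map is injective, and the $f_i$ have pairwise distinct valuation vectors $\mathbf v_i=(v_d(f_i))_d$, all lying on one affine hyperplane (determined by $k$ and $N$) with nonnegative entries.

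From here I would induct on $n$, taking $\sum_i c_i f_i = 0$ with all $c_i\neq0$ and $n$ minimal. Because the $\mathbf v_i$ are distinct there is a cusp $\mathfrak a$ of $\Gamma_0(N)$ at which the orders $v_{\mathfrak a}(f_i)$ are not all equal; let $m$ be their minimum and $S=\{\,i:v_{\mathfrak a}(f_i)=m\,\}$, a nonempty proper subset. Expanding $\sum_i c_i f_i$ in the local parameter at $\mathfrak a$ and comparing the coefficient of the lowest power gives $\sum_{i\in S}c_i\lambda_i=0$, where each $\lambda_i$ is the nonzero leading coefficient of $f_i$ at $\mathfrak a$; if $|S|=1$ this already contradicts $c_i\neq0$. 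The goal is to combine these relations across all cusps until every $c_i$ is forced to vanish --- equivalently, to show that no nontrivial $\C$-linear combination of the $f_i$ can vanish to the minimal possible order at \emph{every} cusp at once.

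That last step is the crux, and it is where the level-$p$ proof genuinely does not generalize: there the $\eta$-quotients lie on a \emph{line}, so they automatically have distinct orders of vanishing at the cusp used for the $q$-expansion and a one-cusp echelon argument suffices, whereas at higher level they fill a hyperplane and can share an order of vanishing at any given cusp. I see two plausible routes. The first is induction on $N$: divide the putative relation by one of the $f_i$, clear denominators using that a quotient of $\eta$-quotients is again an $\eta$-quotient, and exploit multiplicativity together with the transformation formulas of Section~\ref{EtaQuotient} to produce a dependence among $\eta$-quotients of strictly smaller level or with strictly fewer $\eta(\delta z)$-factors, handled inductively. The second is to prove the purely combinatorial statement that the valuation vectors of the $\eta$-quotients inside a fixed space $M_k(N,\chi)$ --- being the lattice points of a bounded region of a hyperplane cut out by the congruences of Theorems~\ref{GHNew} and~\ref{Cvanishing} --- are always \emph{separated by some cusp}, i.e.\ that for any finite subcollection there is a single cusp at which all their orders of vanishing are pairwise distinct; this would reduce the general case to exactly the echelon argument already used in Theorem~\ref{plinind}. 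Such a statement is false for arbitrary lattice configurations on a hyperplane, so a proof would have to use the special arithmetic of $B_N$ and the congruence conditions, and I expect establishing it (or otherwise carrying out the multi-cusp bookkeeping) to be the main obstacle.
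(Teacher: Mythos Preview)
The paper does not prove this statement at all: it is explicitly labeled a \emph{Conjecture} and placed in the ``Future Work'' section, with the authors writing ``we are not sure if this will hold for all $N$, we are confident that this should hold for square-free $N$.'' There is therefore no paper proof to compare your proposal against.

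Your proposal is, by your own account, incomplete. The preliminary step --- showing that the exponent-to-divisor map $(r_\delta)\mapsto(v_d)$ is injective via the invertibility of $B_N$, hence that distinct $\eta$-quotients have distinct valuation vectors --- is correct and is exactly the right generalization of Equation~\ref{vanord}. But, as you note yourself, this does not yet yield linear independence: the echelon argument of Theorem~\ref{plinind} works for $N=p$ only because the valuation vectors lie on a line, so distinctness of the vectors forces distinctness of each coordinate. For composite $N$ several $\eta$-quotients can genuinely share the same order of vanishing at a given cusp, so neither the single-cusp echelon argument nor the ``minimal order at one cusp'' reduction finishes the job. Of your two proposed routes, the first (dividing through by one $f_i$ and reducing the level) is not obviously a descent, since the quotient need not satisfy the congruence conditions of Theorem~\ref{GHNew} at any smaller level; the second (finding a single cusp that separates all valuation vectors simultaneously) is, as you say, a combinatorial statement that is false in general and would require a nontrivial arithmetic argument specific to $B_N$. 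So the gap you identify is real, and your write-up is an honest outline of a strategy rather than a proof --- which is consistent with the paper's own assessment that this remains open.
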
 While, we are not sure if this will hold for all $N$, we are confident that this should hold for square-free $N$. If this conjecture holds and we are able to obtain a count for the number of $\eta$-quotients for $\Gamma_1(N)$, then we can figure out how $\eta$-quotients span a subspace of modular forms at higher levels.
\bigskip\bigskip

\bibliographystyle{alpha}
\bibliography{bibliography}
\end{document}